\numberwithin{equation}{section}
\newtheorem{theorem}{Theorem}[section]
\newtheorem{lemma}[theorem]{Lemma}
\newtheorem{prop}[theorem]{Proposition}
\newcommand{\RR}{\mathbb{R}}
\newcommand{\cO}{\mathcal{O}}
\newcommand{\ZZ}{{\mathbb Z}}
\def\beq{\begin{equation}}
\def\eeq{\end{equation}}
\def\bb1{{1\!\!1}}
\def\rit{{\Bbb R}}
\def\tit{{\Bbb T}}
\def\bl{\mathrm{bl}}
\def\app{\mathrm{app}}
\begin{document}

\title{On nonlinear instability of Prandtl's boundary layers: the case of Rayleigh's stable shear flows} 

\author{Emmanuel Grenier\footnotemark[1]
  \and Toan T. Nguyen\footnotemark[2]
}

\maketitle

\renewcommand{\thefootnote}{\fnsymbol{footnote}}

\footnotetext[1]{Equipe Projet Inria NUMED,
 INRIA Rh\^one Alpes, Unit\'e de Math\'ematiques Pures et Appliqu\'ees., 
 UMR 5669, CNRS et \'Ecole Normale Sup\'erieure de Lyon,
               46, all\'ee d'Italie, 69364 Lyon Cedex 07, France. Email: Emmanuel.Grenier@ens-lyon.fr}

\footnotetext[2]{Department of Mathematics, Penn State University, State College, PA 16803. Email: nguyen@math.psu.edu. 
TN's research was supported in part by the NSF under grant DMS 1764119.}




\begin{abstract}

In 1904, Prandtl introduced his famous boundary layer in order to describe the behavior of solutions of Navier Stokes equations
near a boundary as the viscosity goes to $0$. His Ansatz has later  been  justified for analytic data by R.E. Caflisch and 
M. Sammartino. In this paper, we prove that his expansion is false, up to $O(\nu^{1/4})$ order terms in $L^\infty$ norm,
 in the case of solutions with Sobolev regularity, 
even in cases where the Prandlt's equation is well posed in Sobolev spaces. 

In addition, we also prove that  monotonic boundary layer profiles, which are stable when $\nu = 0$, 
are nonlinearly unstable when $\nu > 0$, provided $\nu$ is small enough, 
up to $O(\nu^{1/4})$ terms in $L^\infty$ norm.  

\end{abstract}

\begin{abstract}

En $1905$, Prandtl introduit sa c\'el\`ebre couche limite pour d\'ecrire le comportement des solutions des \'equations
de Navier Stokes pr\`es d'un bord lorsque la viscosit\'e tend vers $0$. Son Ansatz a \'et\'e justifi\'e dans le cas
de donn\'ees analytiques par R.E. Caflisch et M. Sammartino. Dans cet article nous prouvons que ce d\'eveloppement
asymptotique est faux dans $L^\infty$ \`a l'ordre $O(\nu^{1/4})$ pour des solutions \`a r\'egularit\'e Sobolev,
m\^eme dans des cas o\`u l'\'equation de Prandtl est elle bien pos\'ee.

De plus nous prouvons aussi que des profils de cisaillement monotones, qui sont stables \`a $\nu = 0$,
sont nonlin\'eairement instable quand $\nu > 0$ est assez petit, \`a l'order $O(\nu^{1/4})$ dans $L^\infty$.

\end{abstract}


\section{Introduction}


In this paper, we are interested in the inviscid limit $\nu \to 0$ of the Navier-Stokes equations for incompressible fluids, possibly subject to some external forcing $f^\nu$,
namely
\beq \label{NS1}
\partial_t u^\nu + (u^\nu \cdot \nabla) u^\nu + \nabla p^\nu  = \nu \Delta u^\nu + f^\nu,
\eeq
\beq \label{NS2} 
\nabla \cdot u^\nu = 0,
\eeq
on the half plane $\Omega = \{(x,y) \in \tit \times \rit^+\}$, with the no-slip boundary condition
\beq \label{NS3} 
u^\nu = 0 \quad \hbox{on} \quad \partial \Omega.
\eeq
As $\nu$ goes to $0$, one would expect the solutions $u^\nu$ to converge to solutions of Euler equations for incompressible fluids
\beq \label{Euler1}
\partial_t u^0 + (u^0 \cdot \nabla) u^0 + \nabla p^0 = f^0,
\eeq
\beq \label{Euler2}
\nabla \cdot u^0 = 0,
\eeq
with the boundary condition 
\beq \label{Euler3}
u^0 \cdot n = 0 \quad \hbox{on} \quad \partial \Omega,
\eeq
where $n$ is the unit normal to $\partial \Omega$.

At the beginning of the twentieth century, Prandtl introduced his well known boundary layers in order to describe the
transition from Navier-Stokes to Euler equations as the viscosity tends to zero. Formally, we expect that
\beq \label{Ansatz}
u^\nu(t,x,y) \approx  u^0(t,x,y) + u_P \Bigl(t,x, {y \over \sqrt{\nu}}\Bigr) + \mathcal{O}(\sqrt{\nu})_{L^\infty}
\eeq
where $u^0$ solves the Euler equations \eqref{Euler1}-\eqref{Euler3}, and $u_P$ is the Prandtl boundary layer correction, 
which is of order one.
The size of Prandtl's boundary layer  is of order $\sqrt{\nu}$. Formally it is even possible to write an asymptotic
expansion for $u^\nu$ in terms of powers of $\sqrt{\nu}$. 
The aim of this paper is to investigate whether (\ref{Ansatz}) holds true. 

The Prandtl's boundary layer equations on $u_P = (u_{P,1},u_{P,2})$ read
\beq\label{Pr} 
\begin{aligned}
\partial_t u_{P,1} + u_P \cdot \nabla u_{P,1} + \partial_x p^0 &= \partial_z^2 u_{P,1} + f^P,
\\
\nabla \cdot u_P &=0, 
\end{aligned}
\end{equation}
together with appropriate boundary conditions to correct the no-slip boundary conditions of Navier-Stokes solutions.
 In the above, $\partial_x p^0$ denotes the pressure gradient of the Euler flow on the boundary, and $u_{P,1}$ is the horizontal 
 component of the velocity.  

These equations have been intensively studied in the mathematical literature. Notably, solutions to the Prandtl equations 
have been constructed for monotonic data \cite{Oleinik1,Oleinik, Xu, MW} or data with Gevrey or analytic regularity \cite{Caf1, DGV1, VV}. 
In the case of non-monotonic data with Sobolev regularity, the Prandtl equations are ill-posed  \cite{DGV,GVN,GuoN}. 

The validity of Prandtl's Ansatz \eqref{Ansatz} has been established in \cite{Caf1,Caf2}
for initial data with analytic regularity, leaving a remainder of order $\sqrt \nu$. A similar result is also obtained in \cite{Mae}. 
The Ansatz \eqref{Ansatz}, with a specific boundary layer profile, has been recently justified for data with Gevrey regularity \cite{DGV2} 
by deriving sharp linear semigroup bounds near a stationary shear flow via energy estimates. 

However these positive results hide a strong instability occurring at high spatial frequencies. 
For some profiles, there exist instabilities with horizontal wave numbers of order $\nu^{-1/2}$ 
which grow like $\exp(C t / \sqrt{\nu})$. Within an analytic framework, these
instabilities are initially of order $\exp(-D / \sqrt{\nu})$ and therefore later are of size $\exp( (Ct - D) /\sqrt{\nu})$. 
They remain negligible in bounded time (as long as $t < D/ 2 C$ for instance).

Within Sobolev spaces, these instabilities are predominant. Initially they are of size $C \nu^{-s/2}$
and grow like $C \nu^{-s/2} \exp( C t / \sqrt{\nu})$. They reach $O(1)$ within vanishing times, of order
$\sqrt{\nu} \log \nu$.
 
To construct instabilities we focus on particular initial data, called
shear layer profiles, namely initial data of the form
$$
u^\nu(0) = (U_0(\sqrt{\nu}),0) .
$$
The function $U_0$ is called the profile of the shear layer.
Two kinds of instability may be described

\begin{itemize}

\item An instability with horizontal wave numbers of order $O(\nu^{-1/2})$, growing like $\exp(C t / \sqrt{\nu})$.
This instability occurs when the profile is unstable with respect to the inviscid Euler equations, or, very roughly speaking, 
when the profile $U_0$ has a "strong" inflexion point (in the spirit of Rayleigh's criterium of unstability). For such profiles 
E. Grenier proved in \cite{Gr1} that Prandtl's asymptotic expansion is false,  
up to a remainder of order $\nu^{1/4}$ in $L^\infty$ norm. 
More recently, E. Grenier and T. Nguyen managed to replace $O(\nu^{1/4})$ by $O(1)$ and proved that the difference between
the genuine solution and the Prandtl's expansion may be of order one in supremum norm, and that this difference does not
vanish as $\nu$ goes to $0$. These instabilities are driven by inviscid instabilities occurring within the boundary layer.

For such shear layer profiles, Prandtl's Ansatz is false. However, up to now, there is no existence result
for Prandtl equation for small Sobolev perturbations of these shear layer profiles. It is therefore not
possible to correctly define Prandtl boundary corrector in a neighborhood of $u^\nu(0)$.

\item An instability with horizontal wave numbers of order $O(\nu^{-1/8})$, growing like $\exp(C t / \nu^{1/4})$.
These instabilities are much more subtle. Their growth is slower. 
The instability is driven by the so called "critical layer", which
is at a distance $O(\nu^{5/8})$ from the boundary. The current paper is the equivalent of \cite{Gr1} for monotonic profiles.
For these profiles we are not able to prove $O(1)$ separation, but only $O(\nu^{1/4})$. However,
Prandtl equation is well posed in a Sobolev neighborhood of $u^0(\nu)$. We will discuss this limitation
later.

\end{itemize}

In this paper, we shall prove the nonlinear instability of the Ansatz \eqref{Ansatz} near monotonic profiles. 
Roughly speaking, given an arbitrary stable boundary layer, the two main results in this paper are 
\begin{itemize} 

\item in the case of time-dependent boundary layers, we construct Navier-Stokes solutions, with arbitrarily small forcing, of order $\mathcal{O}(\nu^P)$, with $P$ as large as we want,
so that the Ansatz \eqref{Ansatz} is false near the boundary layer, 
up to a remainder of order $\nu^{1/4+\epsilon}$ in $L^\infty$ norm, $\epsilon$ being arbitrarily small. 

\item in the case of stationary boundary layers, we construct Navier-Stokes solutions, without  forcing term, 
so that the Ansatz \eqref{Ansatz} is false, up to a remainder of order $\nu^{5/8}$ in $L^\infty$ norm. 

\end{itemize}

These results prove that there exist no asymptotic expansion of Prandtl's type, even in the case of monotonic profiles. 
For such  profiles, adding viscosity destabilizes the flow, which is counter intuitive.
Even if Prandtl boundary layer equation is well posed, 
it does not describe the limiting behavior as the viscosity goes to $0$.
 
 The complete construction of the instability involves a sublayer, 
 of size $\nu^{5/8}$, which was not expected in this context.
 This sublayer may itself become unstable when it becomes large enough, leading to the creation of a sub - sub - layer.
 
 In this case we are not able to prove that the perturbation reaches $O(1)$ in $L^\infty$ as is the case for 
 Euler unstable profiles, since the linear growth of the perturbation is much slower. In rescaled variables
 (see section $3$), the linear growth is of order $\nu^{1/4}$. A simple equivalent in terms
 of ordinary differential equations would be
 \beq \label{model}
 \dot \phi = \nu^{1/4} \phi + A \phi^2
 \eeq
 where $\nu^{1/4} \phi$ models the linear growth and $\phi^2$ the nonlinear interaction terms.
 For (\ref{model}), the nonlinear term is comparable to the linear one when $\phi$ is of order $\nu^{1/4}$,
 namely very small. The fate of $\phi$ then depends on the sign of $A$ (blow up if $A > 0$ and convergence
 to a $O(\nu^{1/4})$ stable state if $A < 0$). The situation is similar here. The nonlinear term
 is comparable to the linear one when the perturbation reaches $O(\nu^{1/4})$, preventing
 further investigations. 
 
In the next sections, we shall introduce the precise notion of  Rayleigh's stable boundary layers 
and present our main results. 
After a brief recall of the linear instability results \cite{GGN3, GrN3} in Section \ref{sec-linear}, 
we give the proof of the main results in Sections \ref{sec-approx} and \ref{sec-nonlinear}, respectively.


\subsection{Stable boundary layer profiles}\label{sec-BL}


Throughout this paper, by a boundary layer profile, we mean a shear flow of the form 
\begin{equation}\label{def-bl} U_\bl := \begin{pmatrix} U_\bl(t,\frac{y}{\sqrt \nu}) \\0 \end{pmatrix} \end{equation}
that solves the Prandtl's boundary layer problem \eqref{Pr}, with initial data $U_\bl(0,z) = U(z)$. 
Without forcing, $U_\bl$ is the solution of heat equation
$$
\partial_t U_\bl - \partial_{YY} U_\bl =  0 .
$$
Boundary layer profiles can also be generated by adding a forcing term $f^P$, in which case we shall focus 
precisely on the corresponding stationary boundary layers $U_\bl = U(z)$, with $-U''(z) = f^P$.  
We will consider these two different cases, namely time dependent boundary layers (without forcing) and time independent
boundary layers (with given, time independent, forcing).

As mentioned, the Ansatz \eqref{Ansatz} is proven to be false for initial boundary layer profiles $U(z)$ that are spectrally unstable
to the Euler equations \cite{Gr1}.  {\em In this paper, we shall thus focus on stable profiles,} 
those that are spectrally stable to the Euler equations. This includes, for instance, boundary layer profiles without an inflection point 
by view of the classical Rayleigh's inflection point theorem. 
In this paper we assume in addition that $U(z)$ is strictly monotonic, real analytic, that $U(0) =0$ 
and that $U(z)$ converges exponentially fast at infinity to a finite constant $U_+$. 
By a slight abuse of language, such profiles will be referred to
as stable profiles in this paper.

In order to study the instability of such boundary layers, we first analyze the spectrum of the corresponding linearized problem 
around initial profiles $U(z)$. We first introduce the isotropic boundary layer variables $(t,x,z) = (t,x,y)/\sqrt\nu$.
This leads to the following linearized problem for vorticity $\omega = \partial_z v_1 - \partial_x v_2$, which reads 
\begin{equation}\label{EE-vort} 
(\partial_t - L)\omega = 0, \qquad L\omega : = \sqrt\nu \Delta \omega - U\partial_x \omega - v_2 U'' ,
\end{equation}
together with $v = \nabla^\perp \phi$ and $\Delta \phi = \omega$, satisfying the no-slip boundary conditions 
$\phi = \partial_z \phi =0$ on $\{z=0\}$. 

We then take the the Fourier transform in the $x$ variable only, denoting by $\alpha$ the corresponding wavenumber,
which leads to
\begin{equation}\label{EE-vort-a}
 (\partial_t - L_\alpha)\omega = 0, \qquad L_\alpha\omega : = \sqrt\nu \Delta_\alpha \omega - i\alpha U \omega - i\alpha \phi U'' ,
 \end{equation}
where
$$
\omega_\alpha = \Delta_\alpha \phi_\alpha,
$$
together with the zero boundary conditions 
$\phi_\alpha = \phi'_\alpha =0$ on $z=0$. Here, 
$$
\Delta_\alpha = \partial_z^2 - \alpha^2. 
$$
Together with Y. Guo, we proved in \cite{GGN1,GGN3} that, even for profiles $U$ which are stable as $\nu = 0$, 
there are unstable eigenvalues to the Navier-Stokes problem \eqref{EE-vort-a} for sufficiently small viscosity $\nu$ 
and for a range of wavenumber $\alpha \in [\alpha_1,\alpha_2]$, with $\alpha_1\sim \nu^{1/8}$ and $\alpha_2\sim \nu^{1/12}$. 
The unstable eigenvalues $\lambda_*$ of $L_\alpha$, found in \cite{GGN3}, satisfy 
\begin{equation}\label{lambda-GGN} 
\Re \lambda_* \sim \nu^{1/4} .
\end{equation}
Such an instability was first observed by Heisenberg \cite{Hei,HeiICM}, then Tollmien and C. C. Lin \cite{Lin0,LinBook}; 
see also Drazin and Reid \cite{Reid,Schlichting} for a complete account of the physical literature on the subject. 
See also Theorem \ref{theo-spectralinstablity} below for precise details. In coherence with the physical literature \cite{Reid},
we believe that, $\alpha$ being fixed,
 this eigenvalue is the most unstable one. However, this point is an open question from the mathematical point of view.
 
Next, we observe that $L_\alpha$ is a compact perturbation of the Laplacian $\sqrt \nu \Delta_\alpha$, and hence its unstable spectrum in the usual $L^2$ space is discrete. Thus, for each $\alpha,\nu$, we can define the maximal unstable eigenvalue $\lambda_{\alpha,\nu}$ so that $\Re \lambda_{\alpha,\nu}$ is maximum. We set $\lambda_{\alpha,\nu} =0$, if no unstable eigenvalues exist. 

In this paper, we assume that the unstable eigenvalues found in the spectral instability result, Theorem \ref{theo-spectralinstablity}, 
are maximal eigenvalues. Precisely, we introduce 
\begin{equation}\label{def-ga0max0}
\gamma_0 : = \lim_{\nu \to 0} \sup_{\alpha \in \RR}  \nu^{-1/4}\Re \lambda_{\alpha,\nu}  .
\end{equation}
The existence of unstable eigenvalues in Theorem \ref{theo-spectralinstablity} implies that $\gamma_0$ is positive. 
Our spectral assumption is that $\gamma_0$ is finite (that is, the eigenvalues in Theorem \ref{theo-spectralinstablity} are maximal).


\subsection{Main results}


We are ready to state two main results of this paper.

\begin{theorem}\label{theo-approx} 
Let $U_\bl(t,z)$ be a time-dependent stable boundary layer profile as described in Section \ref{sec-BL}. 
Then, for arbitrarily large $s,N$ and arbitrarily small positive $\epsilon$, there exists
a sequence of functions $u^\nu$ that solves the Navier-Stokes equations \eqref{NS1}-\eqref{NS3}, with some forcing $f^\nu$, so that 
$$
\| u^\nu(0) - U_\bl(0) \|_{H^s} + \sup_{t\in [0,T^\nu]} \| f^\nu (t)\|_{H^s} \le \nu^N,
$$
but 
$$
\| u^\nu(T^\nu) - U_\bl(T^\nu) \|_{L^\infty} \ge \nu^{\frac14+\epsilon},
$$
$$\| \omega^\nu(T^\nu) - \omega_\bl(T^\nu) \|_{L^\infty} \to \infty,$$
for time sequences $T^\nu \to 0$, as $\nu \to 0$. Here, $\omega^\nu = \nabla\times u^\nu$ denotes the vorticity of fluids. 
\end{theorem}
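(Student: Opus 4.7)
The plan is to build a nonlinear WKB-type approximate solution seeded by the most unstable Orr--Sommerfeld eigenmode supplied by Theorem \ref{theo-spectralinstablity}. Let $\alpha^* \in [\nu^{1/8},\nu^{1/12}]$ realize the maximal instability, let $\lambda_* = \lambda_{\alpha^*,\nu}$ with $\Re\lambda_* \sim \gamma_0\,\nu^{1/4}$, and let $\psi^*(z)$ be the associated eigenfunction in the rescaled variable, normalized to have unit $L^\infty$ norm. Fix integers $M$ and $N_0$ large, set $\delta := \nu^{N_0}$, and propose
\[
u_\app^\nu(t,x,y) := U_\bl\bigl(t,y/\sqrt\nu\bigr) + \sum_{j=1}^M \delta^j\, u_j\bigl(t/\sqrt\nu,\,x,\,y/\sqrt\nu\bigr),
\]
where the leading mode $u_1(\tau,x,z) = \Re\bigl(e^{\lambda_*\tau + i\alpha^* x}\psi^*(z)\bigr)$ solves the linearized problem \eqref{NS-lin}, and each $u_j$ for $j\ge 2$ lives on Fourier modes $e^{ik\alpha^* x}$ with $|k|\le j$ and is defined as the linear response to the quadratic source $\sum_{k+\ell = j}(u_k\cdot\nabla)u_\ell$. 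Choose $T^\nu$ by $\delta\,e^{\Re\lambda_* T^\nu/\sqrt\nu} = \nu^{1/4+\epsilon}$, giving $T^\nu \sim \nu^{1/4}|\log\nu| \to 0$.

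Each corrector $u_j$ is then built via Duhamel's formula for the semigroup generated by $L_{k\alpha^*}$ applied to the appropriate source. The resolvent and semigroup bounds from \cite{GGN3,GrN3}, recalled in Section \ref{sec-linear}, combined with the spectral maximality hypothesis $\gamma_0 < \infty$, are expected to give estimates of the shape $\|u_j(\tau)\|_{H^s} \lesssim \nu^{-b_j}\, e^{j\Re\lambda_*\tau}$, the losses $b_j > 0$ coming from the concentration of the sources on the thin critical sublayer. By construction the truncation residual
\[
f^\nu := \bigl[\partial_t + (u_\app^\nu\cdot\nabla) - \nu\Delta\bigr]u_\app^\nu + \nabla p_\app^\nu
\]
retains only the uncontrolled quadratic tails, of size $\mathcal{O}\bigl(\delta^{M+1} e^{(M+1)\Re\lambda_* t/\sqrt\nu}\bigr)$, plus viscous remainders of size $\mathcal{O}(\nu\delta^M)$. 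Taking $M$ and $N_0$ large compared with $1/\epsilon$ and the $b_j$'s then yields $\sup_{[0,T^\nu]}\|f^\nu\|_{H^s} \le \nu^N$ and $\|u_\app^\nu(0) - U_\bl(0)\|_{H^s} \le \nu^N$.

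The claimed lower bounds at $t = T^\nu$ follow by reading off the leading term. For velocity, $\|\delta u_1(T^\nu/\sqrt\nu)\|_{L^\infty} \ge \tfrac12\nu^{1/4+\epsilon}$ by the choice of $T^\nu$ and the normalization of $\psi^*$, while the higher correctors remain subdominant by construction. For vorticity, the chain rule in $y = \sqrt\nu\,z$ produces a $\nu^{-1/2}$ factor from $\partial_y$, and $\Delta\psi^*$ further inherits a concentration factor from the critical sublayer; combined with the amplitude $\nu^{1/4+\epsilon}$, this gives $\|\omega^\nu(T^\nu) - \omega_\bl(T^\nu)\|_{L^\infty} \to \infty$ as $\nu \to 0$.

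The main obstacle is control of the quadratic corrector $u_2$. Its source $(u_1\cdot\nabla)u_1$ is sharply localized on the critical sublayer, so inverting $(\partial_\tau - L_{2\alpha^*})$ against it loses substantial negative powers of $\nu$; consequently $u_2$ grows strictly faster than the naive rate $e^{2\Re\lambda_*\tau}$. This is the anomaly flagged in the introduction as the reason one cannot drive the Ansatz up to amplitude of order one. The remedy is precisely to stop the construction at the much smaller target amplitude $\nu^{1/4+\epsilon}$, and to balance the parameters $(\delta, M, T^\nu)$ so that each loss $b_j$ stays beneath the budget coming from the prefactor $\delta^j$. Making these resolvent losses quantitative and propagating them through the iteration $j = 1,\ldots,M$ is the central technical task of Section \ref{sec-approx}.
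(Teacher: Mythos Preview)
Your overall strategy---seed with the unstable Orr--Sommerfeld mode, iterate via Duhamel, and stop at amplitude $\nu^{1/4+\epsilon}$---is the right one, but there is a genuine gap: you have not accounted for the time-dependence of the background profile. Your mode $u_1$ solves the linearized problem \eqref{NS-lin} around the \emph{frozen} profile $U(z)$, and the semigroup bounds you invoke from Section~\ref{sec-linear} are likewise for $L_\alpha$, the operator with frozen coefficients. But the base flow in the Ansatz is $U_\bl(\sqrt\nu t,z)$, which drifts from $U(z)$ by $\mathcal{O}(\sqrt\nu t)$. In the vorticity equation this produces a perturbation operator
\[
S\omega \;=\; \nu^{-1/8}\mathcal{O}(\sqrt\nu t)\bigl(|\partial_x\omega|+|\partial_x\Delta^{-1}\omega|\bigr),
\]
and the full equation reads $(\partial_t-L)\omega+\nu^{1/8}S\omega+Q(\omega,\omega)=0$. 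Your $\delta^j$-hierarchy, with $u_j$ sourced only by $\sum_{k+\ell=j}Q(u_k,u_\ell)$, leaves $\nu^{1/8}S(\delta u_1)$ entirely in the residual. At the instability time this term is of order $\sqrt\nu T^\nu_{\mathrm{resc}}\cdot\alpha^*\cdot\delta e^{\Re\lambda_* T^\nu_{\mathrm{resc}}}\sim \nu^{1/4}|\log\nu|\cdot\nu^{1/8}\cdot\nu^{1/4+\epsilon}$, i.e.\ only $\mathcal{O}(\nu^{5/8+\epsilon})$, which is \emph{not} $\le\nu^N$. So as written the forcing cannot be made arbitrarily small and the theorem does not follow. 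The paper's remedy is to take $\nu^{1/8}$---not $\delta$---as the expansion parameter, writing $\omega_\app=\nu^p\sum_{j=0}^M\nu^{j/8}\omega_j$; then $S\omega_{j-1}$ becomes part of the source for $\omega_j$ and the $S$-residual is pushed to order $\nu^{p+(M+1)/8}$, which \emph{is} $\le\nu^N$ for $M$ large.

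A secondary point: you attribute the anomalous loss in $u_2$ to the spatial concentration of $(u_1\cdot\nabla)u_1$ on the critical sublayer. In the paper's analysis the boundary-layer norms $\|\cdot\|_{\beta,\gamma,1}$ absorb that localization with no loss; the $\nu^{-1/4}$ deficit comes instead from the Duhamel time integral, since the growth rate is only $\gamma_0\nu^{1/4}$ and $\int_0^t e^{\gamma_1\nu^{1/4}(t-s)}e^{2\gamma_0\nu^{1/4}s}\,ds\sim\nu^{-1/4}e^{2\gamma_0\nu^{1/4}t}$. This is exactly why the construction stalls at amplitude $\nu^{1/4+\epsilon}$ rather than $\mathcal{O}(1)$.
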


This Theorem proves that the Ansatz \eqref{Ansatz} is false, even near stable boundary layers, for data with Sobolev regularity. 

\begin{theorem}[Instability result for stable profiles] \label{firstinstability-stable}
Let $U_\bl = U(z)$ be a stable stationary boundary layer profile as described in Section \ref{sec-BL}. 
Then, for any $s,N$ arbitrarily large, there exists a sequence of solutions $u^\nu$ to the Navier-Stokes equations, 
with forcing $f^\nu = f^P$ (boundary layer forcing), so that $u^\nu$ satisfy 
$$
\| u^\nu(0) - U_\bl \|_{H^s} \le \nu^N,
$$
but 
$$\| u^\nu (T^\nu) - U_\bl\|_{L^\infty} \gtrsim \nu^{5/8},$$
$$\| \omega^\nu (T^\nu) - \omega_\bl \|_{L^\infty} \gtrsim 1,$$
for some time sequences $T^\nu \to 0$, as $\nu \to 0$. 
\end{theorem}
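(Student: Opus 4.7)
The plan is to bootstrap the spectral instability of Theorem \ref{theo-spectralinstablity} into a nonlinear one by tracking a Navier-Stokes solution against an approximate solution built from the most unstable linear eigenmode. The key simplification compared with Theorem \ref{theo-approx} is that $U_\bl = U(z)$ is now time-independent, so the linearized operator $L_\alpha$ is autonomous and its unstable semigroup is exactly exponential; the $\nu^{-1/4}$ loss incurred by the slow drift of $U_\bl$ in the time-dependent case disappears, and this is what should allow us to push the amplitude down to $\nu^{5/8}$ in velocity and $O(1)$ in vorticity. The exponent $5/8$ is expected to reflect the internal structure of the Tollmien--Schlichting eigenfunction at $\alpha \sim \nu^{1/8}$: the viscous sublayer has thickness $(\nu^{1/2}/\alpha)^{1/3}\sim \nu^{1/8}$ in the scaled variable $z$, giving a physical $y$-thickness $\nu^{5/8}$, so the $L^\infty$ ratio of velocity to vorticity across the sublayer is $\sqrt\nu\cdot \nu^{1/8}=\nu^{5/8}$.

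Concretely, I would fix $\alpha\sim \nu^{1/8}$ and extract the unstable eigenpair $(\lambda_*,\hat u_\alpha(z)e^{i\alpha x})$ with $\Re\lambda_* = \gamma_0\nu^{1/4}$ from Theorem \ref{theo-spectralinstablity}, normalizing so that $\|\hat\omega_\alpha\|_{L^\infty}=1$. Then I would construct an approximate solution
\begin{equation*}
u_{app}(t,x,y) = U_\bl(y) + \sum_{k=1}^K \delta^k u_k(t,x,y),
\end{equation*}
where $u_1 = e^{\lambda_* t}\hat u_\alpha e^{i\alpha x}+\text{c.c.}$ and, for $k\ge 2$, $u_k$ is chosen to solve the forced linearized Navier-Stokes equation with source $-\sum_{j+\ell=k}(u_j\cdot\nabla)u_\ell$. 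Since this source grows like $e^{k\Re\lambda_* t}$ and lives on Fourier modes $m\alpha$ with $|m|\le k$, $m\ne\pm 1$ for $k\ge 2$, solving the Orr--Sommerfeld resolvent at these harmonics (where no unstable eigenvalue exists) yields $u_k$ bounded by $C_k\,e^{k\Re\lambda_* t}$ in a suitable weighted space.

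Setting $\delta = \nu^N$ with $N$ large and choosing $T^\nu$ so that $\delta e^{\Re\lambda_* T^\nu}$ saturates the relevant bootstrap threshold, the leading term $\delta u_1(T^\nu)$ produces $L^\infty$ velocity of size $\nu^{5/8}$ and vorticity of size $1$, and $T^\nu\sim \nu^{-1/4}\log\nu^{-1}$ tends to $0$ in physical time. Setting $v = u^\nu - u_{app}$, one finds that $v$ satisfies a linearized Navier-Stokes equation around $U_\bl$ forced by the residual of $u_{app}$ (of order $\delta^{K+1}e^{(K+1)\Re\lambda_*t}$, negligible for $K$ large) together with quadratic terms in $v$ and $u_{app}$. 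A Duhamel bootstrap based on the semigroup estimate for $e^{tL_\alpha}$ on each Fourier mode, combined with the viscous smoothing of the stable part of the spectrum, then keeps $\|v\|$ much smaller than $\delta u_1$ up to $t=T^\nu$, yielding the stated lower bounds on $u^\nu-U_\bl$ and $\omega^\nu-\omega_\bl$.

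The principal obstacle, flagged explicitly by the authors, is that the corrector $u_2$ grows faster than the naive $e^{2\Re\lambda_*t}$ bound. The quadratic self-interaction $(u_1\cdot\nabla)u_1$ projects onto Fourier modes $0$ and $\pm 2\alpha$, and although the linearized operator carries no unstable eigenvalue at these harmonics, its resolvent still suffers boundary-layer losses that produce extra negative powers of $\nu$. Quantifying these losses sharply — so that $\delta^2 u_2$ remains controlled by $\delta u_1$ throughout the bootstrap window, and so that the quadratic-in-$v$ error can be absorbed — will be the technical heart of the argument, and is precisely the mechanism that caps the achievable velocity amplitude at $\nu^{5/8}$ instead of $O(1)$.
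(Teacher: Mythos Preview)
Your overall strategy --- build $u_\app$ from the unstable eigenmode plus nonlinear correctors, then run a Duhamel bootstrap on $v=u^\nu-u_\app$ --- is exactly the paper's. Section~\ref{sec-nonlinear} carries this out in the boundary-layer norms $\|\cdot\|_{\sigma,\beta,\gamma,1}$ using the semigroup bounds of Theorem~\ref{theo-eLt-stable}. Where your reasoning goes wrong is in the bookkeeping of the $\nu$-losses, and since the exponent $5/8$ is precisely a sum of specific losses, this matters.

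The $\nu^{-1/4}$ loss does \emph{not} come from the drift of $U_\bl$ and does \emph{not} disappear in the stationary case. It comes from time integration: because $\Re\lambda_*\sim\nu^{1/4}$, every Duhamel step $\int_0^t e^{L(t-s)}f(s)\,ds$ against a source growing like $e^{k\gamma_0\nu^{1/4}s}$ (with $k\ge 2$) picks up a prefactor $((k-1)\gamma_0\nu^{1/4})^{-1}\sim\nu^{-1/4}$. This --- not any resolvent boundary-layer loss at the harmonics $0,\pm2\alpha$ --- is why $u_2$ exceeds the naive $e^{2\Re\lambda_* t}$; at those harmonics the semigroup still obeys the same $Ce^{\gamma_1\nu^{1/4}t}$ bound with uniform constant since $|2\alpha_\nu|\lesssim\nu^{1/8}$. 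In the error bootstrap there is, moreover, a loss you have not identified: the semigroup constant $C_{\nu,\alpha}=1+\alpha^2\nu^{-1/4}\chi_{\{\alpha\ll1\}}$ of Theorem~\ref{theo-eLt-stable} costs another $\nu^{-1/4}$ once the estimate is made uniform over all wavenumbers, which is forced because the genuine error $v$ generates arbitrarily high harmonics $m\alpha_\nu$. Combined with the $\nu^{-1/8}$ from $\partial_z\omega$ in the nonlinearity (your sublayer observation), the paper's estimate reads $\|\omega(t)\|_{\sigma,\beta,\gamma,1}\lesssim\nu^{-5/8}(\nu^pe^{\gamma_0\nu^{1/4}t})^2$, and the bootstrap closes exactly when $\nu^pe^{\gamma_0\nu^{1/4}t}\sim\nu^{5/8}$; that is, $\tfrac14+\tfrac14+\tfrac18=\tfrac58$. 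Your sublayer heuristic is a correct consistency check (vorticity $\sim1$ across a physical layer of width $\nu^{5/8}$ gives velocity $\sim\nu^{5/8}$) but it is not the mechanism that determines where the bootstrap breaks.
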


The spectral instability for stable profiles gives rise to sublayers (or critical layers) whose thickness is of order $\nu^{5/8}$. 
The velocity gradient in this sublayer grows like $\nu^{-5/8} e^{t/\nu^{1/4} }$, and becomes larg when $t$ is of order $T^\nu$.
As a consequence, they may in turn become unstable after the instability time $T^\nu$ obtained in the above theorem. 
Thus, in order to improve the $\nu^{5/8}$ instability, one needs to further examine the stability properties 
of this sublayer itself (see \cite{GrN4}).


\subsection{Boundary layer norms}


We end the introduction by introducing the boundary layer norms to be used throughout the paper.
 These norms were introduced in \cite{GrN2} to capture the large, but localized, behavior of vorticity near the boundary.  
 Precisely, for each vorticity function $\omega_\alpha = \omega_\alpha(z)$, we introduce 
the following boundary layer norms
\begin{equation}\label{assmp-wbl-stable} 
\| \omega_\alpha\|_{ \beta, \gamma, 1} : = \sup_{z\ge 0} 
\Big [ \Bigl( 1 +  \delta^{-1} \phi_{p} (\delta^{-1} z)  \Bigr)^{-1} e^{\beta z} |\omega_\alpha (z)| \Big],
\end{equation}
where $\beta > 0$, $p$ is a large, fixed number, 
$$
\phi_p(z) = {1 \over 1 + z^p},
$$
and with the boundary layer thickness 
$$
\delta = \gamma \nu^{1/8}
$$
for some $\gamma>0$. We introduce the boundary layer space ${\cal B}^{\beta,\gamma,1}$ 
  to consist of functions whose $\|\cdot \|_{ \beta, \gamma, 1} $ norm is finite. 
  We also denote by $L^\infty_\beta$ the function spaces equipped with the finite norm 
  $$
  \|\omega \|_\beta = \sup_{z\ge 0} e^{\beta z} |\omega(z)|.
  $$ 
  When there is no weight $e^{\beta z}$, we simply write $L^\infty$ for the usual bounded function spaces. 
  Clearly, 
  $$
  L^\infty_\beta \subset {\cal B}^{\beta,\gamma,1}
.  $$ 
 In addition, it is straightforward to check that 
\begin{equation}\label{al-norm}
\| f g \|_{\beta,\gamma, 1} \le \| f \|_{L^\infty} \| g \|_{ \beta,\gamma,1}.
\end{equation}
Finally, for functions $\omega(x,z)$, we introduce 
$$\| \omega\|_{ \sigma,\beta, \gamma, 1} : = \sup_{\alpha \in \RR} (1+|\alpha|)^{\sigma}\| \omega_\alpha\|_{ \beta, \gamma, 1} ,$$
for $\sigma> 1$, in which $\omega_\alpha$ is the Fourier transform of $\omega$ in the variable $x$. Combining with \eqref{al-norm}, we have 
\begin{equation}\label{al-norm1}
\| f g \|_{\sigma,\beta,\gamma, 1} \le \| f \|_{\sigma,0} \| g \|_{\sigma,\beta,\gamma,1},
\end{equation}
where $\| f \|_{\sigma,0} = \sup_{\alpha \in \RR} (1+|\alpha|)^{\sigma}\| f_\alpha\|_{L^\infty}$. 


\section{Linear instability}\label{sec-linear}


In this section, we shall recall the spectral instability of stable boundary layer profiles \cite{GGN3} and the semigroup estimates 
on the corresponding linearized Navier-Stokes equation \cite{GrN2,GrN3}.
 

\subsection{Spectral instability}\label{sec-grmode}


The following theorem, proved in \cite{GGN3}, provides an unstable eigenvalue of $L$ for generic shear flows.  

\begin{theorem}[Spectral instability; \cite{GGN3}]\label{theo-spectralinstablity}
Let $U(z)$ be an arbitrary shear profile with $U(0)=0$ and $U'(0) > 0$ and satisfy 
$$
\sup_{z \ge 0} | \partial^k_z (U(z) - U_+) e^{\eta_0 z} | < + \infty, \qquad k=0,\cdots ,4,
$$ 
for some constants $U_+$ and $\eta_0 > 0$. Let $R = \nu^{-1/2}$ be the Reynolds number, 
and set $\alpha_\mathrm{low}(R)\sim R^{-1/4}$ and  $ \alpha_\mathrm{up}(R)\sim R^{-1/6}$ be the lower and upper stability branches. 

Then, there is a critical Reynolds number $R_c$ so that for all $R\ge R_c$ and all $\alpha \in (\alpha_\mathrm{low}(R), 
\alpha_\mathrm{up}(R))$, there exist
a nontrivial triple $c(R), \hat v(z; R), \hat p(z;R)$, with $\mathrm{Im} ~c(R) >0$, 
such that $v_R: = e^{i\alpha(x-ct) }\hat v(z;R)$ and $p_R: = e^{i\alpha(x-ct)} \hat p(z;R)$ solve the linearized Navier-Stokes problem 
\eqref{EE-vort}. Moreover there holds the following estimate for the growth rate of the unstable solutions:
$$ 
\alpha \Im c(R) \quad \approx\quad  R^{-1/2}
$$
as $R \to \infty$. 
\end{theorem}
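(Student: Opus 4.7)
The plan is to reduce the eigenvalue problem for $L_\alpha$ to an Orr--Sommerfeld boundary value problem and to construct the unstable eigenvalues via matched asymptotic expansions, making the classical Heisenberg--Tollmien--Lin scheme rigorous. Seeking modes $\omega = e^{\lambda t}\Delta_\alpha \phi$ with $\lambda = -i\alpha c$, the eigenvalue equation $L_\alpha \omega = \lambda \omega$ reduces to
\[
\eps\,\Delta_\alpha^2 \phi \;=\; (U-c)\Delta_\alpha \phi \;-\; U''\phi,\qquad \eps := \frac{1}{i\alpha R},
\]
on $z \ge 0$, with $\phi(0)=\phi'(0)=0$ and decay as $z \to \infty$; unstable modes correspond to roots $c$ with $\Im c > 0$, and the instability is purely viscous in nature (it persists even when $U$ is Rayleigh-stable).

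First I would construct four linearly independent solutions of this fourth-order ODE. Two \emph{slow} solutions $\phi_{s,\pm}$ are built as small perturbations of Rayleigh solutions (the $\eps\to 0$ limit) behaving like $e^{\mp\alpha z}$ at infinity; they inherit a logarithmic branch at the critical layer $z_c$, the (possibly complex) root of $U(z_c)=c$, which for small $c$ satisfies $z_c \approx c/U'(0)$. Two \emph{fast} solutions $\phi_{f,\pm}$ are Airy-type layers of width $(\alpha R\,U'(z_c))^{-1/3}$ concentrated near $z_c$, obtained via a Langer transformation that recasts the leading fourth-order operator in Airy form, followed by a contraction argument. Exactly one fast solution decays as $z\to+\infty$.

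Next I would derive the dispersion relation. Decay at infinity restricts any eigenfunction to the span of $\phi_{s,-}$ and $\phi_{f,-}$, so the no-slip conditions at $z=0$ force
\[
\phi_{s,-}(0)\,\phi_{f,-}'(0) \;-\; \phi_{s,-}'(0)\,\phi_{f,-}(0) \;=\; 0,
\]
which is the Orr--Sommerfeld dispersion relation. Writing it schematically as $\mathrm{Ray}(c,\alpha) = \mathcal{T}(c,\alpha,R)$, with $\mathrm{Ray}$ read off from the Rayleigh solution at the wall and $\mathcal{T}$ a Tietjens-type function built from ratios of Airy integrals, I would expand both sides in the small parameters $c$, $\alpha$, and $(\alpha R)^{-1/3}$. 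Balancing against the two limiting regimes of $\mathcal{T}$ produces the two neutral curves: the small-$\mathcal{T}$ balance gives the lower branch $\alpha_{\mathrm{low}}(R)\sim R^{-1/4}$, while the $O(1)$ balance gives the upper branch $\alpha_{\mathrm{up}}(R)\sim R^{-1/6}$. A direct computation of $\partial_c \mathcal{T}$ at a neutral solution shows that the unique nearby root has $\Im c > 0$ in the interior of the window, and a leading-order expansion of the imaginary part gives $\alpha\,\Im c \asymp R^{-1/2}$.

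The main obstacle is the uniform construction of the slow and fast solutions across the critical layer as $c$ moves into the upper half-plane: $z_c$ becomes complex, the logarithmic branch of $\phi_{s,-}$ must be tracked by a consistent contour deformation, and the Airy asymptotics must hold uniformly in sectors covering both sides of $z_c$. This requires delicate resolvent estimates for the Rayleigh operator and uniform Airy-function asymptotics with explicit control of the constants in $\alpha$ and $R$. Once these estimates are in place, the dispersion relation is analytic in $c$ and in the small parameter $R^{-1/2}$, and the existence of an unstable root between the two neutral curves follows from a Rouch\'e argument applied to a perturbation of the real-$c$ neutral solution.
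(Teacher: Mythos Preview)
This theorem is quoted from \cite{GGN3} and not proved in the present paper; the paper only summarizes the structure of the eigenmode as an interior Rayleigh part plus Airy-type critical and boundary sublayers built via a Langer variable. Your sketch --- slow Rayleigh solutions, fast Airy solutions obtained through a Langer transformation, and a Wronskian/Tietjens-type dispersion relation whose roots produce the two branches $\alpha_{\mathrm{low}}\sim R^{-1/4}$, $\alpha_{\mathrm{up}}\sim R^{-1/6}$ and the growth rate $\alpha\,\Im c\sim R^{-1/2}$ --- is exactly the strategy of \cite{GGN3}, so you are on the same track.
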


The proof of the previous Theorem, which can be found in \cite{GGN3}, gives a detailed description of the unstable mode.
In this paper we focus on the lower branch of instability.  In this case
$$
\alpha_\nu \approx R^{-1/4} = \nu^{1/8}, \qquad \Re \lambda_\nu\approx R^{-1/2} = \nu^{1/4},
$$
The vorticity of the unstable mode is of the form 
\begin{equation}\label{w0-gr-stable} 
\omega_0 = e^{\lambda_\nu t} \Delta ( e^{i \alpha_\nu x} \phi_0(z) ) \quad + \quad \mbox{complex conjugate} 
\end{equation}
The stream function $\phi_0$ is constructed through asymptotic expansions, and is of the form 
$$
 \phi_0: =  \phi_{in,0}(z) +\delta_\bl \phi_{bl,0} ( \delta_\bl^{-1} z) 
  $$ 
for some boundary layer function $\phi_{bl,0}$, where $\delta_\bl = \nu^{1/8}$.


By construction,  derivatives of $\phi_{\bl,0}$ satisfy 
$$ 
|\partial_z^k \phi_{\bl,0}(\delta_\bl^{-1} z)| \le C_k \delta_\bl^{-k} e^{-\eta_0 z / \delta_\bl} .
$$
%
%
In addition, it is clear that each $x$-derivative of $\omega_0$ gains a small factor of $\alpha_\nu \approx \nu^{1/8}$. 
We therefore have an accurate description of the linear unstable mode.


\subsection{Linear estimates}


The corresponding semigroup $e^{Lt}$ of the linear problem \eqref{EE-vort} is constructed through the path integral 
\beq \label{int2}
e^{L t} \omega = \int_\RR e^{i\alpha x} e^{L_\alpha t} \omega_\alpha \; d\alpha 
\eeq
in which $\omega_\alpha$ is the Fourier transform of $\omega$ in tangential variables and
$L_\alpha$, defined as in \eqref{EE-vort-a}, is the Fourier transform of $L$. 
One of the main results proved in \cite{GrN3} is the following theorem.

\begin{theorem} \cite{GrN3}\label{theo-eLt-stable} 
Let $\alpha \lesssim \nu^{1/8}$.
Let $\omega_\alpha \in {\cal B}^{\beta,\gamma,1}$ for some positive $\beta$, $\gamma_0$ be defined as in \eqref{def-ga0max0}. 
Assume that $\gamma_0$ is finite. 
Then, for any $\gamma_1>\gamma_0$, there is some positive constant $C_\gamma$ so that 
$$\begin{aligned}
\| e^{L_\alpha t}\omega_\alpha\|_{ \beta, \gamma, 1} &\le C_\gamma  
e^{\gamma_1 \nu^{1/4} t }  e^{- \frac14 \alpha^2 \sqrt\nu t}  \| \omega_\alpha\|_{ \beta, \gamma, 1},
\\
\| \partial_ze^{L_\alpha t}\omega_\alpha\|_{ \beta, \gamma, 1} &\le C_\gamma \Big( \nu^{-1/8}+
 (\sqrt \nu t)^{-1/2} \Big) e^{\gamma_1 \nu^{1/4} t }  e^{- \frac14 \alpha^2 \sqrt\nu t}  \| \omega_\alpha\|_{ \beta, \gamma, 1}. 
\end{aligned}$$
\end{theorem}


\section{Approximate solutions}\label{sec-approx}


Let us now construct an approximate solution $u_\app $, which solves Navier-Stokes equations, up to very small error terms. First, we introduce the rescaled isotropic space time variables
$$
\tilde t = {t \over \sqrt{\nu}}, \quad \tilde x = {x \over \sqrt{\nu}}, \quad \tilde z = {z \over \sqrt{\nu}} .
$$
Without any confusion, we drop the tildes. The Navier-Stokes equations in these scaled variables read 
\begin{equation}\label{NS-scaled}
\begin{aligned}
\partial_t u + (u \cdot \nabla) u + \nabla p   &=\sqrt \nu \Delta u,
\\
\nabla \cdot u &= 0,
\end{aligned}\end{equation}
with the no-slip boundary conditions on $z=0$. Theorem \ref{theo-approx} follows at once from the following theorem. 

\begin{theorem}\label{theo-approx-stable} 
Let $U(z)$ be a stable boundary layer profile, and let $U_\bl(\sqrt \nu t,z)$ be the corresponding Prandtl's boundary layer. 
Then, there exist an approximate solution $\widetilde u_\app$ that approximately solves \eqref{NS-scaled} in the following sense: 
for arbitrarily large numbers $p,M$ and for any $\epsilon>0$, the functions $\widetilde u_\app$ 
solve
\begin{equation}\label{NS-Uapp}
\begin{aligned}
\partial_t \widetilde u_\app + (\widetilde u_\app \cdot \nabla) \widetilde u_\app + \nabla \widetilde p_\app   &=
\sqrt \nu \Delta \widetilde u_\app + {\cal E}_\app,
\\
\nabla \cdot \widetilde u_\app &= 0,
\end{aligned}\end{equation}
for some remainder $ {\cal E}_\app$ and for time $t \le T_\nu$, with $T_\nu$ being defined through
$$
\nu^p e^{\Re \lambda_0 T_\nu} = \nu^{\frac14 + \epsilon}.
$$
In addition, for all $t \in [0,T_\nu ]$, there hold
$$
\begin{aligned}\| \mathrm{curl} (\widetilde u_\app - U_\bl(\sqrt \nu t,z) )\|_{\beta,\gamma,1} &\lesssim \nu^{\frac14 + \epsilon},
\\
\|\mathrm{curl} \mathcal{E}_\app (t)\|_{\beta,\gamma,1} 
&\lesssim   \nu^{M} .
\end{aligned}$$
Furthermore, there are positive constants $\theta_0,\theta_1,\theta_2$ independent of $\nu$ so that there holds 
$$  
\theta_1 \nu^p e^{\Re \lambda_0 t} \le  \| (\widetilde u_\app - U_\bl) (t)\|_{L^\infty} \le \theta_2 \nu^p e^{\Re \lambda_0 t} 
$$
for all $t\in [0,T_\nu]$. In particular, 
$$
\| (\widetilde u_\app - U_\bl) (T_\nu)\|_{L^\infty}\gtrsim \nu^{\frac14 + \epsilon}.
$$ 
\end{theorem}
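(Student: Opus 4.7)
The plan is to build $\widetilde u_{\app}$ iteratively, seeded by the maximal linear unstable mode from Section~\ref{sec-grmode}. Write
$$
\widetilde u_\app(t,x,z) = U_\bl(\sqrt\nu t, z) + \sum_{j=1}^{M} u_j(t,x,z),
$$
where $u_1 = \nu^p \,\Re\bigl(e^{\lambda_0 t + i\alpha_\nu x}v_0(z)\bigr)$ is the principal linear unstable mode of Theorem~\ref{theo-spectralinstablity} scaled to amplitude $\nu^p$, and each $u_j$ for $j\ge 2$ is designed to absorb the leading nonlinear residual produced by $u_1,\ldots,u_{j-1}$. Both $p$ and $M$ are taken large, with $p$ driven by the target amplitude $\nu^{1/4+\epsilon}$ and $M$ by the target error $\nu^M$.

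Plugging the ansatz into \eqref{NS-scaled} and sorting by amplitude yields, for each $j\ge 2$, a forced linearized problem
$$
(\partial_t - L) \omega_j = Q_j := -\mathrm{curl}\sum_{\substack{k+\ell=j\\ k,\ell\ge 1}} (u_k\cdot\nabla) u_\ell \;+\; R_j,
$$
where $\omega_j = \mathrm{curl}\,u_j$ and $R_j$ carries the time-dependence of $U_\bl(\sqrt\nu t,\cdot)$, which is $O(\sqrt\nu t)$-close to $U(\cdot)$. I solve by Duhamel, $\omega_j(t) = \int_0^t e^{L(t-s)} Q_j(s)\,ds$, and estimate in the boundary layer norms $\|\cdot\|_{\beta,\gamma,1}$ introduced in \eqref{assmp-wbl-stable}. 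The eigenmode $u_1$ is supported in tangential frequency $\alpha_\nu \sim \nu^{1/8}$ and concentrated in a layer of thickness $\nu^{1/8}$, with $\omega_1 \in \mathcal{B}^{\beta,\gamma,1}$ of size $\nu^p$. This structure is preserved: $u_j$ lives in tangential frequencies $\lesssim j\alpha_\nu$, for which the constant $C_{\nu,j\alpha_\nu}$ of Theorem~\ref{theo-eLt-stable} is uniformly bounded for $j$ fixed and $\nu$ small. Combining the product inequality \eqref{al-norm}, the derivative loss $\nu^{-1/8}$ incurred when $\partial_z$ acts on sublayer profiles, and the $\partial_z e^{L_\alpha t}$ bound of Theorem~\ref{theo-eLt-stable} (whose $(\sqrt\nu t)^{-1/2}$ singularity is time-integrable), induction on $j$ yields
$$
\|\omega_j(t)\|_{\beta,\gamma,1} \le C_j \, \nu^{-\kappa(j-1)}\bigl(\nu^p e^{\gamma_1\nu^{1/4} t}\bigr)^j
$$
for some $\kappa>0$ independent of $j$.

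Next, fix $T_\nu$ by $\nu^p e^{\Re \lambda_0 T_\nu} = \nu^{1/4+\epsilon}$ (so $T_\nu \sim \nu^{-1/4}\log\nu^{-1}$). Choosing $p$ large enough that $p - \kappa > 1/4 + \epsilon$ makes every $u_j$ with $j\ge 2$ smaller than $u_1$ by a positive power of $\nu$ uniformly on $[0,T_\nu]$; this gives $\|\mathrm{curl}(\widetilde u_\app - U_\bl)\|_{\beta,\gamma,1}\lesssim \nu^{1/4+\epsilon}$ together with the matching pointwise lower and upper bounds $\theta_i \nu^p e^{\Re\lambda_0 t}$, the lower bound coming from the nondegeneracy of the eigenfunction $v_0$ produced in Theorem~\ref{theo-spectralinstablity}. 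The residual $\mathcal{E}_\app$ collects the order-$(M+1)$ and higher nonlinear interactions plus the drift stemming from $U_\bl(\sqrt\nu t,\cdot) - U(\cdot)$; the same scheme bounds $\|\mathrm{curl}\,\mathcal{E}_\app\|_{\beta,\gamma,1}$ by $\nu^{(M+1)p - \kappa M}$, which is $\le \nu^M$ once $p$ is chosen large in terms of $M$ and $\kappa$.

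The main obstacle is controlling the loss $\nu^{-\kappa(j-1)}$ across the iteration. A quadratic product of two sublayer profiles of thickness $\nu^{1/8}$ inherently costs $\nu^{-1/8}$ per $z$-derivative, and inverting $\partial_t - L$ by $\partial_z e^{L_\alpha t}$ costs a further $\nu^{-1/8}$; these losses accumulate with $j$ and are precisely what prevents pushing $\widetilde u_\app$ past amplitude $\nu^{1/4+\epsilon}$, as already noted in the introduction. They are also what forces the iteration to terminate at a finite order $M$ rather than run to order $O(1)$: beyond that order the correctors would overtake the principal linear mode, and the remaining tail must be absorbed into $\mathcal{E}_\app$ rather than cancelled.
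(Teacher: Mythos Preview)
Your global strategy---seed with the maximal unstable mode, absorb nonlinear errors iteratively via Duhamel against $e^{Lt}$, and estimate in the boundary-layer norms---is exactly the paper's. But your bookkeeping for the drift term $R_j$ (the contribution of $U_\bl(\sqrt\nu\, t,\cdot)-U(\cdot)$) has a real gap.

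You organize the correctors by nonlinearity order and claim $\|\omega_j\|_{\beta,\gamma,1}\le C_j\,\nu^{-\kappa(j-1)}\bigl(\nu^p e^{\gamma_1\nu^{1/4}t}\bigr)^j$. For the purely quadratic part of $Q_j$ this is fine: a product of correctors of orders $k$ and $\ell=j-k$ gives a source of order $(\nu^p e^{\gamma_0\nu^{1/4}t})^j$ up to fixed $\nu$-losses, and Duhamel costs one factor $\nu^{-1/4}$. But the drift is \emph{linear} in the previous correctors: applied to $u_1$ it produces a source of size $O(\sqrt\nu\, t)\cdot \nu^p e^{\gamma_0\nu^{1/4}t}$, which after Duhamel yields a contribution still of order $\nu^p e^{\gamma_0\nu^{1/4}t}$---the \emph{same} order as $u_1$, not $(\nu^p)^2$. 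Each further pass of the drift keeps the size $O(\nu^p)$ (with slightly faster exponential growth), so your inductive bound fails as soon as $R_j\ne 0$, and the lower bound $\theta_1\nu^p e^{\Re\lambda_0 t}\le\|\widetilde u_\app-U_\bl\|_{L^\infty}$ is no longer automatic. Nor can you simply discard the drift into $\mathcal E_\app$: since $\sqrt\nu\, T_\nu\sim\nu^{1/4}\log\nu^{-1}$, that would leave a residual of size $\sim\nu^{1/2+\epsilon}\log\nu^{-1}$, not $\nu^M$.

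The paper's remedy is to expand in powers of $\nu^{1/8}$ rather than by nonlinearity order: $\omega_\app=\nu^p\sum_{j=0}^M\nu^{j/8}\omega_j$. After using $\partial_x\sim\alpha_\nu\sim\nu^{1/8}$, the drift operator $S$ is formally of order $\nu^{1/8}$ and shifts $j\mapsto j+1$, while the quadratic term shifts $j\mapsto j+8p$. Each $\omega_j$ is then $O(1)$ with growth $e^{\gamma_0(1+j/(8p))\nu^{1/4}t}$, obtained by trading the factor $\nu^{1/4}t$ for a slightly larger exponent via $\nu^{1/4}t\le Ce^{\gamma_0\nu^{1/4}t/(16p)}$; the $\nu^{-1/4}$ Duhamel loss appears only once per genuine nonlinear step, i.e.\ each time $j$ crosses a multiple of $8p$. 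The explicit $\nu^{j/8}$ prefactor then exactly compensates the accelerated growth coming from iterated drift, so that every corrector stays strictly below $\nu^p\omega_0$ on $[0,T_\nu]$, which is what secures both the upper bound and the claimed lower bound. To repair your argument you need either this finer expansion, or an independent proof that the \emph{time-dependent} evolution generated by linearization about $U_\bl(\sqrt\nu\, t,\cdot)$ obeys the bounds of Theorem~\ref{theo-eLt-stable}, which would let you drop $R_j$ altogether.
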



\subsection{Formal construction}


The construction is classical, following \cite{Gr1}. Indeed, the approximate solutions are constructed in the following form
\beq \label{def-Uapp}
\widetilde u_\app(t,x,z) = U_\bl(\sqrt \nu t,z) + \nu^p \sum_{j = 0}^M \nu^{j/8} u_j(t,x,z).
\eeq
For convenience, let us set $v = u -U_\bl $, where $u$ denotes the genuine solution to the Navier-Stokes equations \eqref{NS-scaled}. 
Then, the vorticity $\omega = \nabla \times v$ solves 
$$
 \partial_t \omega + (U_\bl  (\sqrt \nu t, y)  + v)\cdot \nabla \omega + v_2 \partial_y^2U_s(\sqrt \nu t,y) - \sqrt \nu \Delta \omega =0 
$$
in which $v = \nabla^\perp \Delta^{-1} \omega$ and $v_2$ denotes the vertical component of velocity. Here and in what follows, 
$\Delta^{-1}$ is computed with the zero Dirichlet boundary condition. As $U_\bl $ depends slowly on time, 
we can rewrite the vorticity equation as follows: 
\begin{equation}\label{vort-Phi}
 (\partial_t - L) \omega  + \nu^{1/8} S \omega + Q(\omega, \omega) =0.
\end{equation}
In \eqref{vort-Phi}, $L$ denotes the linearized Navier-Stokes operator around the stationary boundary layer $U  = U_s(0,z)$:  
$$ L \omega : = \sqrt \nu \Delta \omega  - U \partial_x \omega - u_2 U'' ,$$
$Q(\omega, \tilde \omega)$ denotes the quadratic nonlinear term $u \cdot \nabla \tilde \omega$, 
with $v = \nabla^\perp \Delta^{-1} \omega$, and $S$ denotes the perturbed operator defined by 
$$
\begin{aligned}
 S\omega: &= \nu^{-1/8}[U_s (\sqrt \nu t, z)  - U (z)] \partial_x \omega + \nu^{-1/8} u_2 [\partial_y^2 U_s (\sqrt \nu t, z)  - U''(z)] 
.\end{aligned}$$
Recalling that $U_s$ solves the heat equation with initial data $U(z)$, we have 
$$| U_s (\sqrt \nu t, z)  - U (z) | \le C \| U''\|_{L^\infty} e^{-\eta_0 z}\sqrt \nu t$$ 
and 
$$ | \partial_z^2 U_s (\sqrt \nu t, z)  - U''(z)| \le C \| U''\|_{W^{2,\infty}} e^{-\eta_0 z}\sqrt \nu t.$$
Hence, 
\begin{equation}\label{def-Sop}
\begin{aligned}
 S\omega  = \nu^{-1/8}\cO(\sqrt \nu t e^{-\eta_0 z})\Big[  |\partial_x \omega |+ |\partial_x \Delta^{-1}\omega| \Big]
\end{aligned}\end{equation}
in which $\Delta^{-1} \omega$ satisfies the zero boundary condition on $z=0$. 
The approximate solutions are then constructed via the asymptotic expansion: 
\begin{equation}\label{def-omegaAAA} \omega_\app = \nu^p \sum_{j=0}^M \nu^{j/8} \omega_j,\end{equation}
in which $p$ is an arbitrarily large integer. Plugging this Ansatz into \eqref{vort-Phi} and matching order in $\nu$,
 we are led to solve 
 \begin{itemize}

\item for $j =0$: 
$$ (\partial_t - L) \omega_0 = 0$$
with zero boundary conditions on $v_0= \nabla^\perp (\Delta)^{-1}\omega_0$ on $z=0$; 

\item for $0<j\le M$: 
\begin{equation}\label{eqs-omegaj} (\partial_t - L) \omega_j  = R_j, \qquad {\omega_j}_{\vert_{t=0}}=0,\end{equation}
with zero boundary condition on $v_j = \nabla^\perp (\Delta)^{-1}\omega_j$ on $z=0$. Here, the remainders $R_j$ are defined by 
$$ 
R_j = S \omega_{j-1} + \sum_{k + \ell + 8p = j}Q(\omega_k, \omega_\ell).
$$

\end{itemize}
As a consequence, the approximate vorticity $\omega_\app$ solves \eqref{vort-Phi}, leaving the error $R_\app$ defined by
\begin{equation}
\label{error-app}
\begin{aligned}
R_\app
&= \nu^{p+\frac{M+1}{8}} S \omega_{M} +  \sum_{k+ \ell> M+1 -8 p; 1\le k,\ell \le M} \nu^{2p+ \frac{k+\ell}{8}}Q(\omega_k, \omega_\ell) 
\end{aligned}\end{equation}
which formally is of order $ \nu^{p+\frac{M+1}{8}}$, for arbitrary $p$ and $M$.

 
\subsection{Estimates}


We start the construction with $\omega_0$ being the maximal growing mode, constructed in Section \ref{sec-grmode}. We recall 
\begin{equation}\label{def-omega000}
\omega_0 = e^ {\lambda_\nu t} e^{i\alpha_\nu x} \Delta_{\alpha_\nu} \Big( \phi_{in,0}(z) 
+\nu^{1/8} \phi_{\bl,0} ( \nu^{-1/8} z) \Big)  \quad+\quad \mbox{c.c.}
\end{equation}
with $\alpha_\nu \sim \nu^{1/8}$ and $\Re \lambda_\nu \sim \nu^{1/4}$. In what follows, $\alpha_\nu$ and $\lambda_\nu$ are fixed. 
We obtain the following lemma.  

\begin{lemma}\label{lem-omegaj} 
Let $\omega_0$ be the maximal growing mode \eqref{def-omega000}, and let $\omega_j$ 
be inductively constructed by \eqref{eqs-omegaj}. Then, there hold the following uniform bounds:
\begin{equation}\label{induction-jn}
\begin{aligned}
\| \partial_x^a\partial_z^b\omega_j \|_{\sigma,\beta,\gamma,1} & \le C_0 \nu^{a/8}\nu^{-b/8} \nu^{-\frac14[\frac{j}{8p}]}
e^{\gamma_0 (1+\frac{j}{8p})\nu^{1/4}t } 
\end{aligned} \end{equation}
for all $a,j\ge 0$ and for $b=0,1$. 
In addition, the approximate solution $\omega_\app$ defined as in \eqref{def-omegaAAA} satisfies 
\begin{equation}\label{est-omegaAAA}
\| \partial_x^a \partial_z^b \omega_\app\|_{\sigma,\beta,\gamma,1} 
\lesssim \nu^{a/8} \nu^{-b/8} \sum_{j=0}^M \nu^{-\frac14[\frac{j}{8p}]}  \Big( \nu^p e^{\gamma_0 \nu^{1/4} t}\Big)^{1+ \frac{j}{8p}} , 
\end{equation}
for $a\ge 0$ and $b=0,1$. 
 Here, $[k]$ denotes the largest integer so that $[k]\le k$. 
\end{lemma}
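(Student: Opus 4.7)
I would argue by induction on $j$, combining the semigroup bounds of Theorem \ref{theo-eLt-stable} with Duhamel's formula for \eqref{eqs-omegaj}. The base case $j=0$ is immediate from the explicit form \eqref{def-omega000}: the tangential derivative $\partial_x$ brings down a factor $\alpha_\nu \sim \nu^{1/8}$, the vertical derivative acts at cost $\nu^{-1/8}$ only on the boundary layer piece $\nu^{1/8}\phi_{\bl,0}(\nu^{-1/8}z)$, and the time growth is exactly $e^{\Re \lambda_\nu t} \le e^{\gamma_0\nu^{1/4}t}$ by definition of $\gamma_0$. The $\sigma$-supremum in the norm is trivial because $\omega_0$ is supported on the single Fourier mode $\alpha_\nu$.

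\textbf{Inductive step.} Suppose \eqref{induction-jn} holds for all indices below $j$. Write
\[
\omega_j(t) \;=\; \int_0^t e^{L(t-s)} R_j(s)\, ds, \qquad R_j = \nu^{1/8} S\omega_{j-1} + \sum_{k+\ell+8p=j} Q(\omega_k,\omega_\ell).
\]
For the linear piece, the formula \eqref{def-Sop} gives $|S\omega_{j-1}| \lesssim \nu^{-1/8}\sqrt\nu\,t\,(|\partial_x\omega_{j-1}|+|\partial_x\Delta^{-1}\omega_{j-1}|)$; the Biot-Savart factor is harmless on the relevant Fourier support because $\partial_x\Delta^{-1}$ acts as a bounded Fourier multiplier at the fixed scale $\alpha_\nu\sim \nu^{1/8}$. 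Using the induction hypothesis with $a=1$, this contribution is of the order $\sqrt\nu\,t\cdot\nu^{-\frac14[(j-1)/(8p)]}e^{\gamma_0(1+\frac{j-1}{8p})\nu^{1/4}s}$, and $\sqrt\nu\,t$ stays uniformly small on $[0,T_\nu]$ (modulo a logarithm, absorbed by picking $\gamma_1>\gamma_0$ in Theorem \ref{theo-eLt-stable}). For the nonlinear piece, $Q(\omega_k,\omega_\ell) = v_k\cdot\nabla\omega_\ell$ is estimated by the algebra property \eqref{al-norm}, moving a $z$-derivative onto $\omega_\ell$ and using $\|v_k\|_{\beta,\gamma,0}\lesssim \|\omega_k\|_{\beta,\gamma,1}$ via Biot-Savart; the Fourier convolution in $\alpha$ preserves the $\sigma$-sup norm because each $\omega_k$ is essentially localized near multiples of $\alpha_\nu$. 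The induction then yields
\[
\|R_j(s)\|_{\sigma,\beta,\gamma,1}\;\lesssim\; \nu^{1/8}\cdot\nu^{-\frac14[\frac{j-1}{8p}]}e^{\gamma_0(1+\frac{j-1}{8p})\nu^{1/4}s}
\;+\; \sum_{k+\ell+8p=j}\nu^{-\frac14([\frac{k}{8p}]+[\frac{\ell}{8p}])} e^{\gamma_0(2+\frac{k+\ell}{8p})\nu^{1/4}s}.
\]

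\textbf{Closing the induction.} Apply Theorem \ref{theo-eLt-stable} under the Duhamel integral. Since $\alpha\sim\nu^{1/8}$ puts $C_{\nu,\alpha}$ uniformly bounded, we have $\|e^{L(t-s)}R_j(s)\|_{\sigma,\beta,\gamma,1}\lesssim e^{\gamma_1\nu^{1/4}(t-s)}\|R_j(s)\|_{\sigma,\beta,\gamma,1}$. For the nonlinear contributions the double exponent $\gamma_0(2+\frac{k+\ell}{8p})=\gamma_0(1+\frac{j}{8p})$ is strictly larger than $\gamma_1$ (choose $\gamma_1$ close enough to $\gamma_0$), so the elementary integral
\[
\int_0^t e^{\gamma_1\nu^{1/4}(t-s)}e^{\gamma_0(1+\frac{j}{8p})\nu^{1/4}s}\, ds \;\lesssim\; \nu^{-1/4}\, e^{\gamma_0(1+\frac{j}{8p})\nu^{1/4}t}
\]
produces exactly one additional factor $\nu^{-1/4}$, matching the increment $[\frac{j}{8p}]-\max([\frac{k}{8p}],[\frac{\ell}{8p}])=1$ whenever the nonlinear term is active (i.e.\ $j\ge 8p$). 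For the linear remainder $S\omega_{j-1}$ the exponential rate is unchanged, so no extra $\nu^{-1/4}$ is lost and the counter $[\frac{j}{8p}]$ is preserved. Tangential derivatives are handled by differentiating the Duhamel formula in $x$; since $R_j$ is a sum of products of modes clustered near $\alpha_\nu\sim\nu^{1/8}$, each $\partial_x$ gains $\nu^{1/8}$. For the single vertical derivative, one uses the second bound in Theorem \ref{theo-eLt-stable}, whose singular kernel $(\sqrt\nu\,t)^{-1/2}$ is integrable at $s=t$ and produces the claimed $\nu^{-1/8}$ loss (together with the clean $\nu^{-1/8}$ coming from $\partial_z$ landing directly on a boundary-layer profile inside $R_j$). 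Finally, \eqref{est-omegaAAA} follows by summing \eqref{induction-jn} in $j=0,\dots,M$ with the prefactor $\nu^p\nu^{j/8}$ from \eqref{def-omegaAAA} and regrouping the growing factor as $(\nu^p e^{\gamma_0\nu^{1/4}t})^{1+\frac{j}{8p}}$.

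\textbf{Main obstacle.} The delicate point is that the nominal exponential growth rate $\gamma_0(1+\frac{j}{8p})\nu^{1/4}$ exceeds the semigroup's available rate $\gamma_1\nu^{1/4}$, so the Duhamel estimate must be performed in the ``large-source'' regime where the integral is controlled by the source at $s=t$; this forces the additional $\nu^{-1/4}$ loss per nonlinear step and explains the $[\frac{j}{8p}]$ exponent. Equally subtle is verifying that the convolution in $\alpha$ arising from $Q(\omega_k,\omega_\ell)$ remains compatible with the uniform-in-$\alpha$ boundary-layer norm and that iteration does not spread Fourier support beyond a neighborhood of $\alpha_\nu$ where $C_{\nu,\alpha}$ stays bounded.
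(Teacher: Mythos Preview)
Your approach is essentially the paper's: induction on $j$ via Duhamel with the semigroup bounds of Theorem~\ref{theo-eLt-stable}, explicit control of the finite Fourier support (the paper makes this precise by showing $\omega_{j,n}=0$ for $|n|\ge 2^{j+1}$, so all active $\alpha_n\lesssim \nu^{1/8}$ and $C_{\nu,\alpha}$ stays bounded), the trick $\nu^{1/4}t\lesssim e^{\frac{\gamma_0}{16p}\nu^{1/4}t}$ to absorb the $\sqrt\nu\,t$ from $S$, and the floor-function bookkeeping that offsets the $\nu^{-1/4}$ from each nonlinear time-integration. One slip to fix: the arithmetic you need to close the induction is $[\frac{k}{8p}]+[\frac{\ell}{8p}]\le [\frac{k+\ell}{8p}]=[\frac{j}{8p}]-1$, i.e.\ the \emph{sum} of the floors, not $[\frac{j}{8p}]-\max([\frac{k}{8p}],[\frac{\ell}{8p}])=1$, which is false (take $k=\ell=8p$, $j=24p$); with the sum the extra $\nu^{-1/4}$ from the Duhamel integral is exactly compensated by the $\nu^{1/4}$ gained from the floor inequality, as in the paper.
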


\begin{proof} 
For $j\ge 1$, we construct $\omega_j$ having the form 
$$ \omega_j = \sum_{n \in \ZZ} e^{i n \alpha_\nu x} \omega_{j,n}$$
It follows that $\omega_{j,n}$ solves  
$$
(\partial_t - L_{\alpha_n}) \omega_{j,n}  = R_{j,n}, \qquad {\omega_{j,n}}_{\vert_{t=0}}=0 
$$
with $\alpha_n = n \alpha_\nu$ and $R_{j,n}$ the Fourier transform of $R_j$ 
evaluated at the Fourier frequency $\alpha_n$. Precisely, we have 
$$ 
R_{j,n} = S_{\alpha_n} \omega_{j-1,n} + \sum_{k + \ell + 8p = j}\sum_{n_1+n_2 = n}Q_{\alpha_n}(\omega_{k,n_1}, \omega_{\ell,n_2}),
$$
in which $S_{\alpha_n}$ and $Q_{\alpha_n}$ denote the corresponding operator $S$ and $Q$ in the Fourier space. 
The Duhamel's integral reads 
\begin{equation}\label{Duh-omegajn}
\omega_{j,n}(t) = \int_0^t e^{L_{\alpha_n} (t-s)} R_{j,n}(s)\; ds \end{equation}
for all $j\ge 1$ and $n \in \ZZ$. 

It follows directly from an inductive argument and the quadratic nonlinearity of $Q(\cdot,\cdot)$ that for all $0\le j\le M$, 
$\omega_{j,n} = 0$ for all $|n|\ge 2^{j+1}$. 
This proves that $|\alpha_n| \le 2^{M+1} \alpha_\nu \lesssim \nu^{1/8}$, for all $|n| \le 2^{M+1}$.  
Since $\alpha_n \lesssim \nu^{1/8}$, the semigroup bounds from Theorem \ref{theo-eLt-stable} read 
\begin{equation}\label{eLt-jn}\begin{aligned}
\| e^{L_\alpha t}\omega_\alpha\|_{ \beta, \gamma, 1} &\lesssim   e^{\gamma_1 \nu^{1/4} t }  
e^{- \frac14 \alpha^2 \sqrt\nu t}  \| \omega_\alpha\|_{ \beta, \gamma, 1},
\\\| \partial_ze^{L_\alpha t}\omega_\alpha\|_{ \beta, \gamma, 1} &\lesssim \Big( \nu^{-1/8}
+ (\sqrt \nu t)^{-1/2} \Big) e^{\gamma_1 \nu^{1/4} t }  e^{- \frac14 \alpha^2 \sqrt\nu t}  \| \omega_\alpha\|_{ \beta, \gamma, 1}. 
\end{aligned}\end{equation}
In addition, since $\alpha_n\lesssim \nu^{1/8}$, from \eqref{def-Sop}, 
we compute 
$$
\begin{aligned}
 S_{\alpha_n}\omega_{j-1,n}  = \cO(\sqrt \nu t e^{-\eta_0 z})\Big[  |\omega_{j-1,n}|+ |\Delta_{\alpha_n}^{-1}\omega_{j-1,n}| \Big]
\end{aligned}
$$
and hence by induction we obtain 
\begin{equation}\label{bd-Sjn}
\begin{aligned}
\| S_{\alpha_n}\omega_{j-1,n}\|_{\beta,\gamma,1} 
&\lesssim \sqrt \nu t \Big[ \|\omega_{j-1,n}\|_{\beta,\gamma,1}+ \| e^{-\eta_0 z}\Delta_{\alpha_n}^{-1}\omega_{j-1,n}\|_{\beta,\gamma,1} \Big]
\\
&\lesssim \sqrt \nu t \Big[ \|\omega_{j-1,n}\|_{\beta,\gamma,1}+ \| \Delta_{\alpha_n}^{-1}\omega_{j-1,n}\|_{L^\infty} \Big]
\\
&\lesssim \sqrt \nu t \nu^{-\frac14[\frac{j-1}{8p}]} e^{\gamma_0 (1+\frac{j-1}{8p})\nu^{1/4}t } ,
\end{aligned}
\end{equation}
 where we used $\|e^{-\eta_0 z} \cdot \|_{\beta,\gamma,1} \le \|\cdot \|_{L^\infty}$ for $\beta < \eta_0$, and 
 $$
 \| \Delta_{\alpha}^{-1} \omega\|_{L^\infty} 
 \le C \| \omega \|_{\beta,\gamma,1},
 $$
uniformly in small $\alpha$; we shall prove this inequality in the Appendix. Let us first consider the case when $1\le j\le 8p-1$, for which $R_{j,n} = S_{\alpha_n} \omega_{j-1,n} $. 
That is, there is no nonlinearity in the remainder. 
Using the above estimate on $S_{\alpha_n}$ and the semigroup estimate \eqref{eLt-jn} into \eqref{Duh-omegajn}, 
we obtain, for $1\le j\le 8p-1$,
$$\begin{aligned} \|\omega_{j,n}(t)\|_{\beta,\gamma,1} 
&\le \int_0^t \| e^{L_{\alpha_n} (t-s)} S_{\alpha_n} \omega_{j-1,n} (s) \|_{\beta,\gamma,1}\; ds 
\\
&\le C\int_0^t e^{\gamma_1 \nu^{1/4} (t-s) }  \| S_{\alpha_n} \omega_{j-1,n} (s) \|_{\beta,\gamma,1}\; ds 
\\
&\le C\int_0^t e^{\gamma_1 \nu^{1/4} (t-s) } \sqrt \nu s e^{\gamma_0 (1+\frac{j-1}{8p})\nu^{1/4}s } \; ds .
\end{aligned}$$
We choose
$$
\gamma_1 = \gamma_0 (1+ \frac{j-1}{8p} + \frac{1}{16p})
$$
in (\ref{eLt-jn}) and use the inequality 
$$
\nu^{1/4} t \le C e^{\frac{\gamma_0}{16p}\nu^{1/4}t}.
$$
and obtain
\begin{equation}\label{est-ojn1}\begin{aligned} \|\omega_{j,n}(t)\|_{\beta,\gamma,1} 
&\le C\int_0^t e^{\gamma_1 \nu^{1/4} (t-s) } \nu^{1/4} e^{\gamma_0 (1+\frac{j-1}{8p} + \frac{1}{16p})\nu^{1/4}s } \; ds 
\\
&\le C\nu^{1/4} e^{\gamma_0 (1+\frac{j-1}{8p} + \frac{1}{16p})\nu^{1/4} t} \int_0^t  \; ds 
\\
&\le C\nu^{1/4} t e^{\gamma_0 (1+\frac{j-1}{8p} + \frac{1}{16p})\nu^{1/4} t} 
\\& \le C e^{\gamma_0 (1+\frac{j}{8p})\nu^{1/4}t } .
\end{aligned}\end{equation}
Similarly, as for derivatives, we obtain 
$$\begin{aligned} 
&
\|\partial_z\omega_{j,n}(t)\|_{\beta,\gamma,1} 
\\&\le \int_0^t \| e^{L_{\alpha_n} (t-s)} S_{\alpha_n} \omega_{j-1,n} (s) \|_{\beta,\gamma,1}\; ds 
\\
&\le C \int_0^t \Big( \nu^{-1/8}+ (\sqrt \nu (t-s))^{-1/2} \Big) e^{\gamma_1 \nu^{1/4} (t-s) }  \| S_{\alpha_n} \omega_{j-1,n} (s) \|_{\beta,\gamma,1}\; ds 
\\
&\le C 
\int_0^t \Big( \nu^{-1/8}+ (\sqrt \nu (t-s))^{-1/2} \Big) e^{\gamma_1 \nu^{1/4} (t-s) }  \sqrt\nu s e^{\gamma_0 (1+\frac{j-1}{8p})\nu^{1/4} s } \; ds ,
\end{aligned}$$
in which the integral involving $\nu^{-1/8}$ is already treated in \eqref{est-ojn1} and bounded by $C\nu^{-1/8}e^{\gamma_0 (1+\frac{j}{8p})\nu^{1/4}t }.$ As for the second integral, we estimate 
\begin{equation}\label{t12-bd} 
\begin{aligned}
\int_0^t &(\sqrt \nu (t-s))^{-1/2} e^{\gamma_1 \nu^{1/4} (t-s) }  \sqrt\nu s e^{\gamma_0 (1+\frac{j-1}{8p})\nu^{1/4} s } \; ds
\\ 
&\le  \int_0^t (\sqrt \nu (t-s))^{-1/2} e^{\gamma_1 \nu^{1/4} (t-s) }  \nu^{1/4} e^{\gamma_0 (1+\frac{j-1}{8p} + \frac{1}{16p})\nu^{1/4} s } \; ds
\\&\le \nu^{1/4} e^{\gamma_0 (1+\frac{j-1}{8p} + \frac{1}{16p})\nu^{1/4} t }  \int_0^t (\sqrt \nu (t-s))^{-1/2} \; ds 
\\&\le C \sqrt t e^{\gamma_0 (1+\frac{j-1}{8p} + \frac{1}{16p})\nu^{1/4} t } 
\\&\le C \nu^{-1/8} e^{\gamma_0 (1+\frac{j}{8p} )\nu^{1/4} t } .
\end{aligned}\end{equation}
Thus, 
$$\|\partial_z\omega_{j,n}(t)\|_{\beta,\gamma,1} \le C \nu^{-1/8} e^{\gamma_0 (1+\frac{j}{8p} )\nu^{1/4} t }.$$ 
This and \eqref{est-ojn1} prove the inductive bound \eqref{induction-jn} for $j\le 8p-1$. 

For $j \ge 8p$, the quadratic nonlinearity starts to play a role. For $k+\ell = j-8p$, we compute 
\begin{equation}\label{est-Q2w}
Q_{\alpha_n}(\omega_{k,n_1}, \omega_{\ell,n_2}) = i \alpha_\nu \Big( n_2 \partial_z \Delta_{\alpha_n}^{-1} \omega_{k,n_1} 
\omega_{\ell,n_2}  - n_1 \Delta_{\alpha_n}^{-1} \omega_{k,n_1} \partial_z \omega_{\ell,n_2}\Big).
\end{equation}
Using the algebra structure of the boundary layer norm (see \eqref{al-norm}), we have 
$$\begin{aligned}
\alpha_\nu\| \partial_z \Delta_{\alpha_n}^{-1} \omega_{k,n_1} \omega_{\ell,n_2} \|_{\beta,\gamma,1} 
&\lesssim 
\nu^{1/8}\| \partial_z \Delta_{\alpha_n}^{-1} \omega_{k,n_1}\|_{L^\infty} \|\omega_{\ell,n_2} \|_{\beta,\gamma,1}
\\&\lesssim 
\nu^{1/8}\| \omega_{k,n_1}\|_{\beta,\gamma,1} \|\omega_{\ell,n_2} \|_{\beta,\gamma,1}
\\&\lesssim \nu^{1/8}\nu^{-\frac14[\frac{k}{8p}]}  \nu^{-\frac14[\frac{\ell}{8p}]}  e^{\gamma_0 (2+\frac{k+\ell}{8p})\nu^{1/4}t }
 \end{aligned}$$
 where we used
 $$
 \| \partial_z \Delta_{\alpha_n}^{-1} \omega_{k,n_1} \|_{L^\infty} 
 \le C \| \omega_{k,n_1}\|_{\beta,\gamma,1},
 $$
 an inequality which is proven in the Appendix.
Moreover,
$$\begin{aligned}
\alpha_\nu\| \Delta_{\alpha_n}^{-1} \omega_{k,n_1} \partial_z \omega_{\ell,n_2} \|_{\beta,\gamma,1} 
&\lesssim \nu^{1/8}
\|  \Delta_{\alpha_n}^{-1} \omega_{k,n_1}\|_{L^\infty} \| \partial_z\omega_{\ell,n_2} \|_{\beta,\gamma,1}
\\&\lesssim 
\nu^{1/8}\| \omega_{k,n_1}\|_{\beta,\gamma,1} \|\partial_z\omega_{\ell,n_2} \|_{\beta,\gamma,1}
\\&\lesssim \nu^{-\frac14[\frac{k}{8p}]}  \nu^{-\frac14[\frac{\ell}{8p}]}  e^{\gamma_0 (2+\frac{k+\ell}{8p})\nu^{1/4}t },
 \end{aligned}$$
 in which the derivative estimate \eqref{induction-jn} was used. 
We note that 
$$
[\frac{k}{8p}]+ [\frac{\ell}{8p}] \le [\frac{k+\ell}{8p}] = [\frac{j}{8p}] - 1.
$$
 This proves 
$$\begin{aligned}
\|& Q_{\alpha_n}(\omega_{k,n_1}, \omega_{\ell,n_2}) \|_{\beta,\gamma,1} 
\lesssim \nu^{1/4} \nu^{-\frac14[\frac{j}{8p}]} e^{\gamma_0 (1+\frac{j}{8p})\nu^{1/4}t }
\end{aligned}$$ 
for all $k+\ell = j-8p$. This, together with the estimate \eqref{bd-Sjn} on $S_{\alpha_n}$, yields 
$$
\begin{aligned}
\| R_{j,n} (t)\|_{\beta,\gamma,1} 
&\lesssim \sqrt \nu t \nu^{-\frac14[\frac{j-1}{8p}]} e^{\gamma_0 (1+\frac{j-1}{8p})\nu^{1/4}t } + \nu^{1/4} \nu^{-\frac14[\frac{j}{8p}]} e^{\gamma_0 (1+\frac{j}{8p})\nu^{1/4}t }
\\
&\lesssim \nu^{1/4} \nu^{-\frac14[\frac{j}{8p}]} e^{\gamma_0 (1+\frac{j}{8p})\nu^{1/4}t },
\end{aligned}
$$
for all $j\ge 8p$ and $n\in \ZZ$, in which we used $\nu^{1/4} t \le e^{\gamma_0 t/8p}$. 

Putting these estimates into the Duhamel's integral formula \eqref{Duh-omegajn}, we obtain, for $j\ge 8p$,
$$
\begin{aligned}
\|\omega_{j,n} (t)\|_{\beta,\gamma,1}  
&\le C \int_0^t  e^{\gamma_1 \nu^{1/4} (t-s) } \| R_{j,n}(s)\|_{\beta,\gamma,1} \; ds
\\
&\le C \int_0^t  e^{\gamma_1 \nu^{1/4} (t-s) } \nu^{1/4} \nu^{-\frac14[\frac{j}{8p}]} e^{\gamma_0 (1+\frac{j}{8p})\nu^{1/4}s } \; ds  
\\
&\lesssim \nu^{-\frac14[\frac{j}{8p}]}  e^{\gamma_0 (1+\frac{j}{8p})\nu^{1/4}s } 
 \end{aligned}$$
and 
$$\begin{aligned} 
&
\|\partial_z\omega_{j,n}(t)\|_{\beta,\gamma,1} 
\\&\le C \int_0^t \Big( \nu^{-1/8}+ (\sqrt \nu (t-s))^{-1/2} \Big) e^{\gamma_1 \nu^{1/4} (t-s) }  \| R_{j,n}(s)\|_{\beta,\gamma,1} \; ds
\\
&\le C 
\int_0^t \Big( \nu^{-1/8}+ (\sqrt \nu (t-s))^{-1/2} \Big) e^{\gamma_1 \nu^{1/4} (t-s) } \nu^{1/4} \nu^{-\frac14[\frac{j}{8p}]} e^{\gamma_0 (1+\frac{j}{8p})\nu^{1/4}s } \; ds .
\end{aligned}$$
Using \eqref{t12-bd}, we obtain 
$$
\begin{aligned}
\|\partial_z \omega_{j,n} (t)\|_{\beta,\gamma,1}  
\lesssim \nu^{-1/8}\nu^{-\frac14[\frac{j}{8p}]}  e^{\gamma_0 (1+\frac{j}{8p})\nu^{1/4}s } ,
 \end{aligned}$$
which completes the proof of \eqref{induction-jn}. The lemma follows. 
\end{proof}

 
 \subsection{The remainder}
 

We recall that the approximate vorticity $\omega_\app$, constructed as in \eqref{def-omegaAAA}, approximately solves \eqref{vort-Phi}, 
leaving the error $R_\app$ defined by
$$
\begin{aligned}
R_\app
&= \nu^{p+\frac{M+1}{8}} S \omega_{M} +  \sum_{k+ \ell> M+1 -8 p; 1\le k,\ell \le M} \nu^{2p+ \frac{k+\ell}{8}}Q(\omega_k, \omega_\ell) .
\end{aligned}$$
Using the estimates in Lemma \ref{lem-omegaj}, we obtain  
$$\begin{aligned}
\|S \omega_{M} \|_{\sigma,\beta,\gamma,1} &\lesssim  \nu^{1/4} \nu^{-\frac14[\frac{M+1}{8p}]} 
e^{\gamma_0 (1+\frac{M+1}{8p})\nu^{1/4}t }
 \\
 \|Q(\omega_k, \omega_\ell) \|_{\sigma,\beta,\gamma,1} &\lesssim  \nu^{1/4} 
 \nu^{-\frac14[\frac{k+\ell}{8p}]} e^{\gamma_0 (2+\frac{k+\ell}{8p})\nu^{1/4}t }.\end{aligned}$$
This yields 
\begin{equation}\label{est-RAA}
\begin{aligned}
\|\ R_\app\|_{\sigma,\beta,\gamma,1} 
&\lesssim  \nu^{1/4} \sum_{j=M+1}^{2M} \nu^{-\frac14[\frac{j}{8p}]}  \Big( \nu^p e^{\gamma_0 \nu^{1/4} t}\Big)^{1+ \frac{j}{8p}} .
\end{aligned}\end{equation}


\subsection{Proof of Theorem \ref{theo-approx-stable}}


The proof of the Theorem now  straightforwardly follows from the estimates from Lemma \ref{lem-omegaj} 
and the estimate \eqref{est-RAA} on the remainder. Indeed, we choose the time $T_*$ so that 
\begin{equation}\label{def-Tstar} \nu^p e^{\gamma_0 \nu^{1/4} T_*} = \nu^\tau\end{equation}
for some fixed $\tau>\frac14$. It then follows that for all $t\le T_*$ and $j\ge 0$, there holds 
$$
\begin{aligned}
 \nu^{-\frac14[\frac{j}{8p}]}  \Big( \nu^p e^{\gamma_0 \nu^{1/4} t}\Big)^{1+ \frac{j}{8p}}  \lesssim \nu^\tau  \nu^{(\tau - \frac14) \frac{j}{8p}}.
\end{aligned}
$$
Using this into the estimates \eqref{est-omegaAAA} and \eqref{est-RAA}, respectively, we obtain 
\begin{equation}\label{est-wRapp}
\begin{aligned}
\| \partial_z^b\omega_\app(t)\|_{\sigma,\beta,\gamma,1} &\lesssim \nu^{p-b/8} e^{\gamma_0 \nu^{1/4} t}  \lesssim \nu^{\tau- b/8},
\\
\|R_\app (t)\|_{\sigma,\beta,\gamma,1} 
&\lesssim  \nu^{1/4} \nu^{-\frac14[\frac{M}{8p}]}  \Big( \nu^p e^{\gamma_0 \nu^{1/4} t}\Big)^{1+ \frac{M}{8p}} 
\\&\lesssim \nu^{\tau+1/4}  \nu^{(\tau - \frac14) \frac{M}{8p}},
\end{aligned}\end{equation}
for all $t \le T_*$. Since $\tau>\frac14$ and $M$ is arbitrarily large (and fixed), the remainder is of order $\nu^P$ 
for arbitrarily large number $P$. The theorem is proved.


\section{Nonlinear instability}\label{sec-nonlinear}


We are now ready to give the proof of Theorem \ref{firstinstability-stable}. Let $\widetilde u_\app$ 
be the approximate solution constructed in Theorem \ref{theo-approx-stable} and let  
$$v = u - \widetilde u_\app,$$
with $u$ being the genuine solution to the nonlinear Navier-Stokes equations. 
The corresponding vorticity $\omega = \nabla \times v$ solves 
$$ 
\partial_t   \omega + (\widetilde u_\mathrm{app} + v) \cdot \nabla \omega + v \cdot \nabla \widetilde \omega_\mathrm{app} 
= \sqrt \nu \Delta \omega +  R_\app
$$
for the remainder $R_\app  = \mathrm{curl } \,  \, {\cal E}_\app$ satisfying the estimate \eqref{est-RAA}. Let us write 
$$
u_\app = \widetilde u_\app - U_\bl .
$$
To make use of the semigroup bound for the linearized operator $\partial_t - L$, we rewrite the vorticity equation as  
$$ 
(\partial_t - L) \omega + (u_\app + v) \cdot \nabla \omega + v \cdot \nabla \omega_\app  = R_\app
$$
with $\omega_{\vert_{t=0}} = 0$. We note that since the boundary layer profile is stationary, 
the perturbative operator $S$ defined as in \eqref{def-Sop} is in fact zero. The Duhamel's principle then yields 
\begin{equation}\label{Duh-omega} 
\omega (t) = \int_0^t e^{L(t-s)} \Big( R_\app - (u_\app + v) \cdot \nabla \omega - v \cdot \nabla \omega_\app  \Big) \; ds.
\end{equation}
Using the representation \eqref{Duh-omega}, we shall prove the existence and give estimates on $\omega$. 
We shall work with the following norm 
\begin{equation}\label{def-123norm} 
||| \omega(t) |||: = \| \omega(t) \|_{\sigma,\beta,\gamma,1}+ \nu^{1/8}\| \partial_x \omega(t) \|_{\sigma,\beta,\gamma,1} 
+ \nu^{1/8}\| \partial_z \omega (t)\|_{\sigma,\beta,\gamma,1} 
\end{equation}
in which the factor $\nu^{1/8}$ added in the norm is to overcome the loss of $\nu^{-1/8}$ for derivatives
(see \eqref{semi-bounds} for more details).

Let $p$ be an arbitrary large number. We introduce the maximal time $T_\nu$ of existence, defined by 
\begin{equation}\label{claim-www}
\begin{aligned}
T_\nu: = \max\Big \{ t\in [0,T_*]~:~\sup_{0\le s\le t} ||| \omega(s) |||  \le \nu^p e^{\gamma_0 \nu^{1/4} t}\Big\}
 \end{aligned}\end{equation}
in which $T_*$ is defined as in \eqref{def-Tstar}. By the short time existence theory, with zero initial data, $T_\nu$ exists and is positive. 
It remains to give a lower bound estimate on $T_\nu$.  First, we obtain the following lemmas. 
\begin{lemma} For $t \in [0,T_*]$, there hold
$$
\begin{aligned}
\| \partial_x^a \partial_z^b \omega_\app(t)\|_{\sigma,\beta,\gamma,1} &\lesssim \nu^{a/8-b/8} \Big( \nu^p e^{\gamma_0 \nu^{1/4} t}\Big)
\\
\|\partial_x^a \partial_z^bR_\app (t)\|_{\sigma,\beta,\gamma,1} 
&\lesssim  \nu^{1/4+a/8-b/8} \nu^{-\frac14[\frac{M}{8p}]}  \Big( \nu^p e^{\gamma_0 \nu^{1/4} t}\Big)^{1+ \frac{M}{8p}} .
\end{aligned}$$
\end{lemma}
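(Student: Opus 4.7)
The plan is to obtain both inequalities as parametrized corollaries of Lemma~\ref{lem-omegaj}, tracking how the factors $\nu^{a/8}$ and $\nu^{-b/8}$ enter from $\partial_x^a\partial_z^b$; throughout I work in the regime $a\ge 0$, $b\in\{0,1\}$ covered by that lemma. The argument is the one already used to derive \eqref{est-wRapp}, applied termwise to the finite sums defining $\omega_\app$ and $R_\app$, and the key simplifier is \eqref{def-Tstar}, which yields $\nu^p e^{\gamma_0\nu^{1/4}t}\le\nu^\tau$ with $\tau>\tfrac14$ on $[0,T_*]$.

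For the $\omega_\app$ estimate, I substitute $\omega_\app=\nu^p\sum_{j=0}^M\nu^{j/8}\omega_j$ and apply \eqref{induction-jn} to each summand, obtaining
\[
\nu^p\nu^{j/8}\|\partial_x^a\partial_z^b\omega_j\|_{\sigma,\beta,\gamma,1}
\lesssim \nu^{a/8-b/8}\nu^{-\tfrac14[j/(8p)]}\bigl(\nu^p e^{\gamma_0\nu^{1/4}t}\bigr)^{1+j/(8p)}.
\]
Using $[j/(8p)]\le j/(8p)$ and $\tau>\tfrac14$, the right-hand side is majorized by $\nu^{a/8-b/8}\bigl(\nu^p e^{\gamma_0\nu^{1/4}t}\bigr)\nu^{(\tau-1/4)j/(8p)}$, and the finite geometric sum in $j$ closes the estimate. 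For the $R_\app$ estimate, I split \eqref{error-app} into the linear piece $\nu^{p+(M+1)/8}S\omega_M$, controlled via \eqref{bd-Sjn} after absorbing $\sqrt\nu\,t$ into the exponential by $\nu^{1/4}t\le C e^{c\nu^{1/4}t}$, and the quadratic pieces $\nu^{2p+(k+\ell)/8}Q(\omega_k,\omega_\ell)$, which I expand using \eqref{est-Q2w}; the algebra property \eqref{al-norm}, Lemma~\ref{lem-omegaj} on each factor, and the Appendix bound $\|\partial_z\Delta_\alpha^{-1}\omega\|_{\beta,\gamma,0}\le C\|\omega\|_{\beta,\gamma,1}$ convert the prefactor $\alpha_\nu\sim\nu^{1/8}$ into the announced $\nu^{1/4}$. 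Each $\partial_x$ produces only an extra $\nu^{1/8}$, since $\omega_j$ lives on Fourier modes with $|\alpha_n|\lesssim\nu^{1/8}$; inserting $\nu^p e^{\gamma_0\nu^{1/4}t}\le\nu^\tau$, the sum over $k,\ell$ is then dominated by $k+\ell=M$, giving the displayed bound.

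The main technical obstacle is the $z$-derivative landing on the Biot--Savart factor $\Delta_\alpha^{-1}\omega$ inside a quadratic term; the Appendix inequality absorbs this at no cost, which is the structural feature of the $\phi_P$-weighted boundary-layer norm that makes the whole scheme work.
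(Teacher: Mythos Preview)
Your proposal is correct and follows exactly the approach of the paper, which simply records that the lemma ``follows directly from Lemma~\ref{lem-omegaj} and the estimate \eqref{est-RAA} on the remainder $R_\app$, upon noting the fact that for $t\in [0,T_*]$, $\nu^p e^{\gamma_0 \nu^{1/4} t}$ remains sufficiently small.'' You have essentially unpacked this one sentence: the $\omega_\app$ bound is the termwise application of \eqref{induction-jn} summed via the $\nu^{(\tau-1/4)j/(8p)}$ geometric factor, and the $R_\app$ bound re-derives \eqref{est-RAA} with the derivative prefactors tracked, using the same ingredients (\eqref{bd-Sjn}, \eqref{est-Q2w}, \eqref{al-norm}, and the Appendix elliptic estimate) that appear in the proof of Lemma~\ref{lem-omegaj} and in Section~\ref{sec-approx}.3.
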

\begin{proof} This follows directly from Lemma \ref{lem-omegaj} and the estimate \eqref{est-RAA} on the remainder $R_\app$, 
upon noting the fact that for $t\in [0,T_*]$, $\nu^p e^{\gamma_0 \nu^{1/4} t}$ remains sufficiently small.
\end{proof}

\begin{lemma} There holds 
$$
\Big\|( u_\app + v) \cdot \nabla \omega + v \cdot \nabla  \omega_\app \Big\|_{\sigma,\beta,\gamma,1} 
\lesssim \nu^{-\frac18}\Big( \nu^p e^{\gamma_0 \nu^{1/4} t}\Big)^2.
$$
for $t\in [0,T_\nu]$. 
\end{lemma}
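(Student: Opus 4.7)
The strategy is to split the expression into the three natural pieces $u_\app\cdot\nabla\omega$, $v\cdot\nabla\omega$, $v\cdot\nabla\omega_\app$, and estimate each using the algebra property \eqref{al-norm} with the velocity factor placed in $\CalB^{\beta,\gamma,0}$ and the vorticity-gradient factor in $\CalB^{\beta,\gamma,1}$. The parameter count that drives the claim is that each velocity is of size $\cO(\nu^p e^{\gamma_0 \nu^{1/4}t})$ while each vorticity gradient is of size $\cO(\nu^{-1/8}\cdot\nu^p e^{\gamma_0 \nu^{1/4}t})$, so every cross product produces exactly the asserted $\nu^{-1/8}\bigl(\nu^p e^{\gamma_0 \nu^{1/4}t}\bigr)^2$.

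\textbf{Step 1: Biot--Savart at the level of each Fourier mode.} For a vorticity $\omega$ with Fourier mode $\omega_\alpha$ at tangential frequency $\alpha$, the associated velocity $v=\nabla^\perp\Delta^{-1}\omega$ has components whose Fourier modes are $\partial_z\Delta_\alpha^{-1}\omega_\alpha$ and $-i\alpha\Delta_\alpha^{-1}\omega_\alpha$. Using the appendix inequality already invoked in the proof of Lemma~\ref{lem-omegaj}, namely $\|\partial_z\Delta_\alpha^{-1}\omega_\alpha\|_{\beta,\gamma,0}\lesssim\|\omega_\alpha\|_{\beta,\gamma,1}$, together with an analogous bound for $\alpha\Delta_\alpha^{-1}\omega_\alpha$ in the same norm, I obtain
\[
\|v_\alpha\|_{\beta,\gamma,0}\lesssim \|\omega_\alpha\|_{\beta,\gamma,1},
\qquad
\|(u_\app)_\alpha\|_{\beta,\gamma,0}\lesssim \|(\omega_\app)_\alpha\|_{\beta,\gamma,1}.
\]

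\textbf{Step 2: Fourier convolution and the algebra.} The bilinear term $(w\cdot\nabla\widetilde\omega)_\alpha$ is the convolution $\sum_{\alpha_1+\alpha_2=\alpha} w_{\alpha_1}\cdot(\nabla\widetilde\omega)_{\alpha_2}$ on the discrete frequency lattice generated by $\alpha_\nu$ (both $\omega_\app$ and, by iteration of the Duhamel integral \eqref{Duh-omega} starting from zero, $\omega$ have Fourier support on this lattice, and only finitely many modes contribute at each step). Applying \eqref{al-norm} with $p=0$, $q=1$ to each Fourier mode and then taking the supremum over $\alpha$ gives
\[
\|w\cdot\nabla\widetilde\omega\|_{\sigma,\beta,\gamma,1}\lesssim \|w\|_{\sigma,\beta,\gamma,0}\,\|\nabla\widetilde\omega\|_{\sigma,\beta,\gamma,1},
\]
where the finite/discrete Fourier support of the factors absorbs the convolution sum.

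\textbf{Step 3: Inserting the size bounds.} For the approximate part, the preceding lemma yields $\|u_\app\|_{\sigma,\beta,\gamma,0}\lesssim\nu^p e^{\gamma_0\nu^{1/4}t}$ and $\|\nabla\omega_\app\|_{\sigma,\beta,\gamma,1}\lesssim\nu^{-1/8}\nu^p e^{\gamma_0\nu^{1/4}t}$. For the perturbation, the bootstrap definition \eqref{claim-www} together with Step~1 gives
\[
\|v\|_{\sigma,\beta,\gamma,0}\lesssim\|\omega\|_{\sigma,\beta,\gamma,1}\le |||\omega|||\le\nu^p e^{\gamma_0\nu^{1/4}t},
\]
and $\|\nabla\omega\|_{\sigma,\beta,\gamma,1}\le\nu^{-1/8}|||\omega|||\le\nu^{-1/8}\nu^p e^{\gamma_0\nu^{1/4}t}$ directly from the definition of $|||\cdot|||$. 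Combining Step~2 with these four bounds yields the claimed estimate for each of the three pieces $u_\app\cdot\nabla\omega$, $v\cdot\nabla\omega$, $v\cdot\nabla\omega_\app$, hence for their sum.

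\textbf{Main obstacle.} The only delicate point is the handling of the convolution in Step~2: while the algebra \eqref{al-norm} is pointwise in $z$ and therefore pointwise in $\alpha$, the $\sigma$-norm is a supremum rather than a sum, so one must exploit that all vorticities encountered are supported on the discrete lattice $\alpha_\nu\ZZ$ and that only finitely many modes are active at once (inherited from $\omega_\app$ through the Duhamel iteration). Once this structural observation is recorded, the remainder of the lemma is a direct bookkeeping of the Biot--Savart and algebra bounds, and the $\nu^{-1/8}$ loss is identified as coming solely from the vertical derivative of the vorticity factor.
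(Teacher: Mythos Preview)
Your approach matches the paper's exactly: the elliptic estimate $\|v\|_{\sigma,\beta,\gamma,0}\lesssim\|\omega\|_{\sigma,\beta,\gamma,1}$ from the appendix, then the algebra \eqref{al-norm} applied to each of the three bilinear pieces, then insertion of the preceding lemma's bounds on $\omega_\app$ together with the bootstrap hypothesis \eqref{claim-www}. Regarding your ``main obstacle'', the paper simply asserts $\|u\cdot\nabla\tilde\omega\|_{\sigma,\beta,\gamma,1}\le\|u\|_{\sigma,\beta,\gamma,0}\|\nabla\tilde\omega\|_{\sigma,\beta,\gamma,1}$ at the $\sigma$-level citing \eqref{al-norm} without discussing the convolution; your finite-Fourier-support observation is accurate for the pieces involving $u_\app$ or $\omega_\app$, but not literally for $v\cdot\nabla\omega$ (the genuine solution excites all harmonics on $\alpha_\nu\ZZ$), and this passage is left at the same formal level in the paper's own proof.
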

\begin{proof}
We first recall the elliptic estimate 
$$
\| u\|_{\sigma,0} \lesssim \|\omega\|_{\sigma,\beta,\gamma,1}
$$ 
which is proven in the Appendix \ref{sec-elliptic}), and the following uniform bounds (see \eqref{al-norm1})
$$
\begin{aligned}
\| u \cdot \nabla \tilde \omega \|_{\sigma,\beta,\gamma,1}
&\le \| u \|_{\sigma,0} \| \nabla \tilde \omega \|_{\sigma,\beta,\gamma,1}
\\&\le \| \omega \|_{\sigma,\beta,\gamma,1} \| \nabla\tilde \omega \|_{\sigma,\beta,\gamma,1} .
 \end{aligned} $$
Using this and the bounds on $\omega_\app$, we obtain 
$$
\begin{aligned}
 \| v \cdot \nabla \omega_\app \|_{\sigma,\beta,\gamma,1} &\lesssim 
 \nu^{-1/8}\Big( \nu^p e^{\gamma_0 \nu^{1/4} t}\Big) \| \omega \|_{\sigma,\beta,\gamma,1} 
 \lesssim\nu^{-\frac18} \Big( \nu^p e^{\gamma_0 \nu^{1/4} t}\Big)^2
\end{aligned}
$$
and 
$$
\begin{aligned}
 \| ( u_{\app} + v)\cdot \nabla \omega \|_{\sigma,\beta,\gamma,1}  &
 \lesssim \Big( \nu^p e^{\gamma_0 \nu^{1/4} t} + \| \omega\|_{\sigma,\beta,\gamma,1}\Big) \| \nabla \omega \|_{\sigma,\beta,\gamma,1} 
\\&\lesssim \nu^{-\frac18}\Big( \nu^p e^{\gamma_0 \nu^{1/4} t}\Big)^2
 .\end{aligned}
 $$
This proves the lemma. \end{proof}

Next, using Theorem \ref{theo-eLt-stable} and noting that $\alpha e^{-\alpha^2 \nu t} \lesssim 1+ (\nu t)^{-1/2}$, we obtain the following uniform semigroup bounds:
\begin{equation}\label{semi-bounds}\begin{aligned}
\| e^{L t}\omega\|_{ \sigma,\beta, \gamma, 1} &\le C_0 \nu^{-1/4} e^{\gamma_1 \nu^{1/4} t }  \| \omega\|_{\sigma, \beta, \gamma, 1} 
\\
\| \partial_x e^{L t}\omega\|_{ \sigma,\beta, \gamma, 1} &\le
 C_0 \nu^{-1/4}\Big( 1+ (\sqrt \nu t)^{-1/2} \Big) e^{\gamma_1 \nu^{1/4} t }  \| \omega\|_{\sigma, \beta, \gamma, 1} 
\\
\| \partial_z e^{L t}\omega\|_{ \sigma,\beta, \gamma, 1} &\le
 C_0 \nu^{-1/4} \Big( \nu^{-1/8}+ (\sqrt \nu t)^{-1/2} \Big) e^{\gamma_1 \nu^{1/4} t }  \| \omega\|_{\sigma, \beta, \gamma, 1} .
\end{aligned}\end{equation}
We are now ready to apply the above estimates into the Duhamel's integral formula \eqref{Duh-omega}. 
We obtain 
$$
\begin{aligned}
 \|  \omega (t)\|_{\sigma,\beta,\gamma,1} 
 &\lesssim \nu^{-1/4}\int_0^t e^{\gamma_1 \nu^{1/4} (t-s)} \nu^{-\frac18}\Big (\nu^{p} e^{\gamma_0 \nu^{1/4} s}\Big)^2\; ds
 \\&\quad + \nu^{-1/4}\int_0^t e^{\gamma_1 \nu^{1/4} (t-s)}  
 \nu^{1/4} \nu^{-\frac14[\frac{M}{8p}]}  \Big( \nu^p e^{\gamma_0 \nu^{1/4} s}\Big)^{1+ \frac{M}{8p}}  
 \; ds 
\\
 &\lesssim \nu^{-5/8} \Big (\nu^{p} e^{\gamma_0 \nu^{1/4} t}\Big)^2 + \nu^P\Big( \nu^p e^{\gamma_0 \nu^{1/4} t} \Big),
 \end{aligned}$$
upon taking $\gamma_1$ sufficiently close to $\gamma_0$. Set $T_1$ so that 
\begin{equation}\label{def-T11} \nu^p e^{\gamma_0 \nu^{1/4} T_1} =\theta_0 \nu^{\frac58},\end{equation}
for some sufficiently small and positive constant $\theta_0$. 
Then, for all $t\le T_1$, there holds 
$$
 \begin{aligned}\|  \omega (t)\|_{\sigma,\beta,\gamma,1}  
 &\lesssim \nu^{p} e^{\gamma_0 \nu^{1/4}t} \Big[ \theta_0
 + \nu^{P}\Big]
\end{aligned} $$
Similarly, we estimate the derivatives of $\omega$. The Duhamel integral and the semigroup bounds yield 
$$
\begin{aligned}
\| \nabla \omega (t)\|_{\sigma,\beta,\gamma,1} 
 &\lesssim \nu^{-1/4}\int_0^t e^{\gamma_1 \nu^{1/4} (t-s)}  \Big( \nu^{-1/8}+ (\sqrt \nu (t-s))^{-1/2} \Big) 
 \\&\quad \times  
 \Big[ 
 \nu^{-\frac18}\Big (\nu^{p} e^{\gamma_0 \nu^{1/4} s}\Big)^2 
+  \nu^{-\frac14[\frac{M}{8p}]}  \Big( \nu^p e^{\gamma_0 \nu^{1/4} s}\Big)^{1+ \frac{M}{8p}} 
\Big] \; ds 
\\&\lesssim \nu^{-5/8} \Big[ 
 \nu^{-\frac18}\Big (\nu^{p} e^{\gamma_0 \nu^{1/4} s}\Big)^2 
+  \nu^{-\frac14[\frac{M}{8p}]}  \Big( \nu^p e^{\gamma_0 \nu^{1/4} s}\Big)^{1+ \frac{M}{8p}} 
\Big] 
 \end{aligned}$$
By view of \eqref{def-T11} and the estimate \eqref{t12-bd}, 
the above yields 
$$
 \begin{aligned}
 \| \nabla \omega (t)\|_{\sigma,\beta,\gamma,1}  &\lesssim \nu^{p-\frac18} e^{\gamma_0 \nu^{1/4}t} \Big[ \theta_0 
+ \nu^P\Big] .
\end{aligned} 
$$
To summarize, for $t\le \min\{T_*, T_1,T_\nu\}$, with the times defined as in \eqref{def-Tstar},
 \eqref{claim-www}, and \eqref{def-T11}, we obtain 
$$ ||| w(t)||| \lesssim  \nu^{p} e^{\gamma_0 \nu^{1/4}t} \Big[ \theta_0+ \nu^P\Big].$$
Taking $\theta_0$ sufficiently small, we obtain 
$$
 ||| w(t)||| \ll\nu^{p} e^{\gamma_0 \nu^{1/4}t} 
$$
for all time $t\le \min\{T_*, T_1,T_\nu\}$. 
In particular, this proves that the maximal time of existence $T_\nu$ is greater than $ T_1$, defined as in \eqref{def-T11}.  
This proves that at the time $t=T_*$, the approximate solution grows to order of $\nu^{5/8}$ in the $L^\infty$ norm. 
Theorem \ref{firstinstability-stable} is proved.

\appendix


\section{Elliptic estimates}\label{sec-elliptic}


In this section, for sake of completeness, we recall the elliptic estimates with respect to the boundary layer norms. 
These estimates are proven in \cite[Section 3]{GrN2}. 

First, we consider the classical one-dimensional Laplace equation
\beq \label{Lap1}
\Delta_\alpha \phi = \partial_z^2 \phi - \alpha^2 \phi = f
\eeq
on the half line $z \ge 0$, with the Dirichlet boundary condition $ \phi(0) = 0$. 
We recall the function space $L^\infty_\beta$ defined by the finite norm $\| f\|_\beta  =\sup_{z \ge 0} |f(z) | e^{\beta z}$. We will prove
\begin{prop}
If $f \in L^\infty_\beta$ for some $\beta >0$, then $\phi \in L^\infty$. In addition, there holds
\beq \label{Lap3}
(1+\alpha^2) \| \phi \|_{L^\infty} + (1+ | \alpha |) \,  \| \partial_z \phi \|_{L^\infty}
+ \| \partial_z^2 \phi \|_{L^\infty}  \le C \| f \|_{\beta},
\eeq
where the constant $C$ is independent of $\alpha \in \RR$.
\end{prop}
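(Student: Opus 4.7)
The natural tool is the explicit Green's function for the one-dimensional modified Helmholtz operator $\partial_z^2-\alpha^2$ on the half-line with Dirichlet data at $0$ and decay at infinity. A short computation (image method) gives
\[
G_\alpha(z,w) \;=\; -\frac{1}{2|\alpha|}\Big(e^{-|\alpha|\,|z-w|} - e^{-|\alpha|(z+w)}\Big),
\]
so that the unique decaying solution of \eqref{Lap1} with $\phi(0)=0$ is $\phi(z)=\int_0^\infty G_\alpha(z,w)f(w)\,dw$. With an explicit kernel in hand, the whole proposition reduces to three elementary convolution estimates.

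\textbf{Step 1: pointwise bound on $\phi$.} I would simply absorb the $e^{\beta z}$ weight into the integrand. Using the crude bound $|G_\alpha(z,w)|\le |\alpha|^{-1}e^{-|\alpha|\,|z-w|}$, writing $|f(w)|\le\|f\|_\beta e^{-\beta w}$, and changing variables $s=w-z$, one obtains
\[
|\phi(z)|\,e^{\beta z}\;\le\;\frac{\|f\|_\beta}{|\alpha|}\int_{-z}^{\infty} e^{-|\alpha|\,|s|}\,e^{-\beta s}\,ds
\;\le\;\frac{\|f\|_\beta}{|\alpha|}\Big(\frac{1}{|\alpha|-\beta}+\frac{1}{|\alpha|+\beta}\Big)
\;=\;\frac{2\|f\|_\beta}{\alpha^2-\beta^2}.
\]
Since $\alpha$ is a nonzero integer one has $|\alpha|\ge1$, so the assumption $\beta<1/2$ gives $\alpha^2-\beta^2\ge\tfrac34\alpha^2$ and therefore $\alpha^2\|\phi\|_\beta\lesssim \|f\|_\beta$, uniformly in $\alpha$. (This is precisely where $\beta<1/2$ enters: it ensures $\beta<|\alpha|$ with room to spare, so that both one-sided exponential integrals are finite and the total bound is comparable to $|\alpha|^{-2}$.)

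\textbf{Step 2: first derivative.} Differentiating the kernel gives
\[
\partial_z G_\alpha(z,w)\;=\;\tfrac12\Big(\mathrm{sgn}(z-w)\,e^{-|\alpha|\,|z-w|}\;-\;e^{-|\alpha|(z+w)}\Big),
\]
so $|\partial_z G_\alpha(z,w)|\le e^{-|\alpha|\,|z-w|}$. Repeating the same convolution computation, with one factor of $|\alpha|$ dropped on both sides, yields $|\alpha|\,\|\partial_z\phi\|_\beta\lesssim \tfrac{\alpha^2}{\alpha^2-\beta^2}\|f\|_\beta\lesssim \|f\|_\beta$.

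\textbf{Step 3: second derivative.} This is free: the equation itself gives $\partial_z^2\phi=\alpha^2\phi+f$, so
\[
\|\partial_z^2\phi\|_\beta\;\le\;\alpha^2\|\phi\|_\beta+\|f\|_\beta\;\lesssim\;\|f\|_\beta
\]
by Step 1.

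\textbf{Main obstacle.} There is no serious obstacle here; the only place to be careful is the $\alpha$-uniformity of the constant. One must keep track that the gain $|\alpha|^{-1}$ in the Green's function and the gain $(|\alpha|\pm\beta)^{-1}$ from the exponential convolutions combine to give the correct $\alpha^2$ in front of $\|\phi\|_\beta$ and $|\alpha|$ in front of $\|\partial_z\phi\|_\beta$; the hypothesis $\beta<1/2$ (rather than $\beta<|\alpha|$) is what makes these constants uniform down to the worst case $|\alpha|=1$.
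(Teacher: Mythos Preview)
Your proof is correct and follows essentially the same approach as the paper: the paper also writes down the explicit Green's function \eqref{laplacephi1}, bounds $\phi$ and $\partial_z\phi$ by direct estimation of the integral (noting that the $x$-integration contributes the extra $|\alpha|^{-1}$), and then reads off the $\partial_z^2\phi$ bound from the equation. Your version is in fact more detailed, in particular making explicit where the hypothesis $\beta<1/2$ is used to guarantee $\beta<|\alpha|$ uniformly over nonzero integers $\alpha$.
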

\begin{proof} The solution $\phi$ of (\ref{Lap1}) is explicitly given by
\begin{equation}\label{laplacephi1}
\phi(z) =\int_0^\infty G_\alpha (x,z)f(x) dx 
\end{equation}
where $G_\alpha(x,z) =  - {1 \over 2 \alpha} \Bigl( e^{- \alpha | x-z | }  - e^{-\alpha | x + z |} \Bigr) $. A direct bound leads to
$$
\| \phi \|_{L^\infty} \le {C \over \alpha^2} \| f \|_{\beta}
$$
in which the extra factor of $\alpha^{-1}$ is due to the $x$-integration. 
Differentiating the integral formula, we get
$$
 \|\partial_z \phi \|_{L^\infty} \le {C \over \alpha} \| f \|_{\beta} .
$$
The estimate for $\partial_z^2 \phi$ follows by using directly the equation $\partial_z^2 \phi = \alpha^2 \phi + f$. 
This yields the lemma for the case when $\alpha$ is bounded away from zero. 

As for small $\alpha$, we note that $G_\alpha(0,z) = 0$ and $|\partial_x G_\alpha(x,z)|\le 1$. Hence, $|G_\alpha(x,z)|\le |x|$ and so    
$$ |\phi(z)| \le \int_0^\infty |G_\alpha (x,z)f(x)| dx  \le \|f\|_\beta \int_0^\infty |x| e^{-\beta x} \; dx \le C \| f\|_\beta.$$
Similarly, since $|\partial_z G_\alpha(x,z)|\le 1$, we get 
$$ |\partial_z\phi(z)| \le \int_0^\infty |\partial_zG_\alpha (x,z)f(x)| dx  \le \|f\|_\beta \int_0^\infty e^{-\beta x} \; dx \le C \| f\|_\beta.$$
The lemma follows. 
\end{proof}
We now establish a similar property for ${\cal B}^{\beta,\gamma,1}$ norms:
\begin{prop} \label{proplaplace3}
If $f \in {\cal B}^{\beta,\gamma,1}$ for some $\beta>0$, then $\phi \in L^\infty$. In addition, there holds
\beq \label{Lap4}
(1+| \alpha |) \, \| \phi \|_{L^\infty} 
+  \| \partial_z \phi \|_{L^\infty}   \le C \| f \|_{\beta,\gamma,1},
\eeq
where the constant $C$ is independent of $\alpha \in \RR$. 
\end{prop}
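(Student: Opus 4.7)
The plan is to leverage the explicit integral representation \eqref{laplacephi1} inherited from the previous proposition, together with one crucial observation: although the $\mathcal{B}^{\beta,\gamma,1}$ norm permits $f$ to be of size $\delta^{-1}$ near the boundary, the singular profile $\delta^{-1}\phi_P(\delta^{-1}z)$ has $L^1$ mass uniformly bounded in $\delta=\gamma\nu^{1/8}$. Indeed, the substitution $y=\delta^{-1}x$ gives
\[
\int_0^\infty \delta^{-1}\phi_P(\delta^{-1}x)\,dx = \int_0^\infty \frac{dy}{1+y^P} \le C_P,
\]
which is finite since $P\ge 2$ and independent of $\delta$. This is the structural point that makes the $\mathcal{B}^{\beta,\gamma,1}\to\mathcal{A}^\beta$ gain possible: functions in $\mathcal{B}^{\beta,\gamma,1}$ are effectively $L^1_\beta$ uniformly in $\nu$.

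For the bound on $|\alpha|\|\phi\|_{\beta,0}$, I would start from \eqref{laplacephi1} and use $x+z\ge|x-z|$ on the half-line to get the pointwise kernel bound $|e^{-\alpha|x-z|}-e^{-\alpha(x+z)}|\le e^{-|\alpha||x-z|}$. Multiplying by $e^{\beta z}$ and inserting the factor $e^{\beta x}e^{-\beta x}$ inside the integrand yields
\[
|\alpha|\, e^{\beta z}|\phi(z)| \le \tfrac12 \int_0^\infty e^{-(|\alpha|-\beta)|z-x|}\, e^{\beta x}|f(x)|\,dx.
\]
Since $\alpha$ is a nonzero integer we have $|\alpha|\ge 1>2\beta$, so $|\alpha|-\beta\ge 1/2$ and the exponential is an integrable convolution kernel. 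Estimating $e^{\beta x}|f(x)|\le \|f\|_{\beta,\gamma,1}(1+\delta^{-1}\phi_P(\delta^{-1}x))$ and splitting the integral into two pieces, the first is bounded by $\|f\|_{\beta,\gamma,1}\cdot 2/(|\alpha|-\beta)$, while the second is controlled by $\|f\|_{\beta,\gamma,1}\cdot\|\delta^{-1}\phi_P(\delta^{-1}\cdot)\|_{L^1}\le C_P\|f\|_{\beta,\gamma,1}$ by the observation above. This gives $|\alpha|\,\|\phi\|_{\beta,0}\le C\|f\|_{\beta,\gamma,1}$.

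For $\|\partial_z\phi\|_{\beta,0}$, differentiating under the integral in \eqref{laplacephi1} produces
\[
\partial_z\phi(z) = -\tfrac12 \int_0^\infty \bigl[\mathrm{sgn}(x-z)\,e^{-\alpha|x-z|}+e^{-\alpha(x+z)}\bigr]f(x)\,dx,
\]
so the $1/\alpha$ prefactor disappears. Exactly the same convolution estimate, with the same splitting of $|f|$ into its $L^\infty_\beta$ part and its $\delta^{-1}\phi_P(\delta^{-1}\cdot)$ part, delivers $\|\partial_z\phi\|_{\beta,0}\le C\|f\|_{\beta,\gamma,1}$ with constants independent of $\alpha$ and $\nu$.

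The only delicate point is the uniformity in $\nu$ (equivalently, in $\delta$), and it is handled entirely by the scale-invariant integral $\int_0^\infty \delta^{-1}\phi_P(\delta^{-1}x)\,dx$; everything else is the same calculation as in the previous proposition. No further decomposition of $f$ into an inner layer plus an outer part is needed because the kernel $e^{-|\alpha||x-z|}$ is an $L^1$ convolution kernel whose $L^1$ norm is controlled by $(|\alpha|-\beta)^{-1}$, which is finite under the hypothesis $\beta<1/2$.
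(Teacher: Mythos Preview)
Your proof is correct and follows essentially the same approach as the paper: use the explicit Green's function representation \eqref{laplacephi1}, bound the kernel by $e^{-|\alpha||x-z|}$, and split the source according to the two pieces of the boundary layer weight, handling the singular piece via the scale-invariant $L^1$ bound $\int_0^\infty \delta^{-1}\phi_P(\delta^{-1}x)\,dx\le C_P$. Your treatment of the exponential weight $e^{\beta z}$ (absorbing $e^{\beta(z-x)}$ into the kernel to produce $e^{-(|\alpha|-\beta)|x-z|}$ and invoking $|\alpha|\ge 1>2\beta$) is in fact more explicit than the paper's sketch, which glosses over this step; the paper's displayed estimate literally bounds only $|\phi(z)|$ rather than $e^{\beta z}|\phi(z)|$, so your version actually makes clear where the hypothesis $\beta<1/2$ enters.
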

\begin{proof}
We will only consider the case $\alpha > 0$, 
the opposite case being similar. As above, since $G_\alpha(x,z)$ is bounded by $\alpha^{-1}$, using (\ref{laplacephi1}), we have 
$$
| \phi (z) | \le \alpha^{-1} \| f \|_{\beta,\gamma,1} \int_0^\infty
e^{- \alpha |   z -   x |} e^{-\beta   x} 
\Bigl( 1 + \delta^{-1} \phi_P(\delta^{-1} x) \Bigr) d  x
$$
$$
\le  \alpha^{-1} \| f \|_{\beta,\gamma,1} 
\Bigl( \alpha^{-1} + \delta^{-1} \int_0^\infty \phi_P(\delta^{-1} x) d   x \Bigr) 
$$
which yields the claimed bound for $\phi$ since $P >1$. A similar proof applies for $\partial_z \phi$. 
\end{proof}

%

Next, let us now turn to the two dimensional Laplace operator.
\begin{prop} \label{inverseLaplace}
Let $\phi$ be the solution of
$$
- \Delta \phi = \omega
$$
with the zero Dirichlet boundary condition, and let
$$
v = \nabla^\perp \phi. 
$$
If $\omega \in {\cal B}^{\sigma,\beta,\gamma,1}$, then $\phi  \in {\cal B}^{\sigma,0}$ and
$v  = (v_1,v_2) \in {\cal B}^{\sigma,0}$. Moreover, there hold the following elliptic estimates 
\beq \label{Lap4}
\| \phi \|_{\sigma,0} + \|  v_1 \|_{\sigma,0} + \|  v_2 \|_{\sigma,0}   \le C \| \omega \|_{\sigma,\beta,\gamma,1},
\eeq
\end{prop}
\begin{proof}
The proof follows directly from taking the Fourier transform in the $x$ variable, with dual integer Fourier component $\alpha$, and using Proposition \ref{proplaplace3}. 
\end{proof}


%

\begin{thebibliography}{99}




\bibitem{Xu} {  Alexandre, R.;  Wang, Y.-G.;  Xu, C.-J.; and Yang, T.}  Well-posedness of the Prandtl equation in Sobolev spaces.
 {\em J. Amer. Math. Soc.}  28  (2015),  no. 3, 745--784.




\bibitem{Reid} { Drazin, P. G.; Reid, W. H.} {\em Hydrodynamic stability.} Cambridge University Press, 2004.

%
%

\bibitem{DGV} {  G\'erard-Varet, D. and  Dormy, E.}. On the ill-posedness of the Prandtl equation.
{\em J. Amer. Math. Soc.}  23  (2010),  no. 2, 591--609.


\bibitem{DGV2} { G\'erard-Varet, D.,  Maekawa Y., and  Masmoudi, N}
Gevrey Stability of Prandtl Expansions for 2D Navier-Stokes. arXiv:1607.06434

\bibitem{DGV1} {  G\'erard-Varet, D. and  Masmoudi, N}. Well-posedness for the Prandtl system without analyticity or
 monotonicity.  {\em Ann. Sci. \'Ec. Norm. Sup\'er. } (4)  48  (2015),  no. 6, 1273--1325.


\bibitem{GVN} { D. G\'erard-Varet and T. Nguyen}. Remarks on the ill-posedness of the Prandtl equation.
{\em Asymptotic Analysis,} 77 (2012), no. 1-2, 71--88.


\bibitem{Gr1} { Grenier, E.}
\newblock On the nonlinear instability of {E}uler and {P}randtl equations.
\newblock {\em Comm. Pure Appl. Math. 53}, 9 (2000), 1067--1091.


\bibitem{GGN1}
{E. Grenier, Y. Guo, and T. Nguyen.}
\newblock Spectral stability of {P}randtl boundary layers: an overview.
\newblock {\em Analysis (Berlin)}, 35(4):343--355, 2015.

\bibitem{GGN3}
{ E. Grenier, Y. Guo, and T. Nguyen.}
\newblock Spectral instability of characteristic boundary layer flows.
\newblock {\em Duke Math J., to appear}, 2016.

%


\bibitem{GrN2} { E. Grenier and T. Nguyen,} Sharp bounds on linear semigroup of Navier-Stokes with boundary layer norms. arXiv:1703.00881

\bibitem{GrN3} { E. Grenier and T. Nguyen,} Green function for linearized Navier-Stokes around a boundary layer profile: near critical layers. Preprint 2017.  

\bibitem{GrN4} { E. Grenier and T. Nguyen,} Sublayer of Prandtl boundary layers. {\em Arch. Ration. Mech. Anal. } to appear 2018. 


\bibitem{Hei} { Heisenberg, W.} $\ddot{\mbox{U}}$ber Stabilit$\ddot{\mbox{a}}$t und Turbulenz von Fl$\ddot{\mbox{u}}$ssigkeitsstr$\ddot{\mbox{o}}$men. Ann. Phys. 74, 577--627 (1924)


\bibitem{HeiICM} { Heisenberg, W.}  {\em On the stability of laminar flow.}
 Proceedings of the International Congress of Mathematicians, Cambridge,
 Mass., 1950, vol. 2, 
 pp. 292--296. Amer. Math. Soc., Providence, R. I.,  1952. 



\bibitem{GuoN} {  Y. Guo and T.  Nguyen, }  A note on Prandtl boundary layers.
{\em Comm. Pure Appl. Math.}  64  (2011),  no. 10, 1416--1438.



\bibitem{VV} {  Ignatova, M.  and Vicol, V.} Almost global existence for the Prandtl boundary layer equations.
{\em Arch. Ration. Mech. Anal. } 220  (2016),  no. 2, 809--848.



%
%
%

 
\bibitem{Lin0} { C. C. Lin}, {On the stability of two-dimensional parallel flow}, Proc. Nat. Acad. Sci. U. S. A.  30,  (1944). 316--323.



\bibitem{LinBook} { C. C. Lin}, {\em The theory of hydrodynamic stability.} {Cambridge, at the University Press, 1955.}
%
%

\bibitem{Mae} {  Maekawa, Y.} On the inviscid limit problem of the vorticity equations for viscous
 incompressible flows in the half-plane.
 {\em Comm. Pure Appl. Math.}  67  (2014),  no. 7, 1045--1128.


 
\bibitem{MW} {  Masmoudi, N. and  Wong, T. K.} Local-in-time existence and uniqueness of solutions to the Prandtl
 equations by energy methods.
{\em Comm. Pure Appl. Math.}  68  (2015),  no. 10, 1683--1741.



\bibitem{Oleinik1} {  Oleinik, O. A.}  On the mathematical theory of boundary layer for an unsteady flow of
 incompressible fluid.
{\em  Prikl. Mat. Meh.}  30 801--821 (Russian); translated as {\em J. Appl. Math. Mech.}  30  1966 951--974 (1967).

\bibitem{Oleinik} {  Oleinik, O. A. and  Samokhin, V. N.} {\em Mathematical models in boundary layer theory.}
Applied Mathematics and Mathematical Computation, 15. Chapman \& Hall/CRC, Boca Raton, FL,  1999. x+516 pp. ISBN: 1-58488-015-5 



\bibitem{Caf1} {  Sammartino, M. and Caflisch, R. E.}  Zero viscosity limit for analytic solutions, of the Navier-Stokes
 equation on a half-space. I. Existence for Euler and Prandtl equations.
{\em Comm. Math. Phys.}  192  (1998),  no. 2, 433--461.
 
\bibitem{Caf2} {  Sammartino, M. and  Caflisch, R. E.}  Zero viscosity limit for analytic solutions of the Navier-Stokes
 equation on a half-space. II. Construction of the Navier-Stokes solution.
 {\em Comm. Math. Phys.}  192  (1998),  no. 2, 463--491.
 


\bibitem{Schlichting} { H. Schlichting,} {\em Boundary layer theory,} 
Translated by J. Kestin. 4th ed. McGraw--Hill Series in Mechanical Engineering. McGraw--Hill Book Co., Inc., New York, 1960.



\end{thebibliography}
\end{document}